\numberwithin{equation}{section}
\newtheorem{thrm}{Theorem}[section]
\newtheorem{lemma}[thrm]{Lemma}
\newtheorem{prop}[thrm]{Proposition}
\newcommand{\COG}{C^\infty_0(\bG)}
\newcommand{\DsG}{\mathcal{D}^{s,2}(\bG)}
\newcommand{\eps}{\varepsilon}
\newcommand{\p}{\partial}
\newcommand{\G}{\Gamma}
\newcommand{\la}{\lambda}
\newcommand{\ve}{\varepsilon}
\newcommand{\R}{\mathbb{R}}
\newcommand{\Rn}{\mathbb{R}^n}
\newcommand{\Om}{\Omega}
\newcommand{\Hn}{\mathbb{H}^n}
\newcommand{\bG}{\mathbb{G}}
\newcommand{\algg}{\mathfrak g}
\newcommand{\frlap}{\mathcal{L}_s}
\newcommand{\norm}[1]{\lVert#1\rVert}
\newcommand{\volg}{\, d{g}}
\newcommand{\vol}{\, d{h}}
\newcommand*\MSC[1][1991]{\par\leavevmode\hbox{%
\textit{\,\,\,\,\, #1 Mathematical subject classification:\ }}}
\newcommand\blfootnote[1]{%
  \begingroup
  \renewcommand\thefootnote{}\footnote{#1}%
  \addtocounter{footnote}{-1}%
  \endgroup
  }
\begin{document}

\title[Optimal decay for solutions, etc.]{Optimal decay for solutions of nonlocal semilinear  equations with critical exponent in homogeneous groups}

{\blfootnote{\MSC[2020]{35R03, 35R11, 53C18}}}

\keywords{Nonlocal CR Yamabe problem. Optimal decay of positive solutions. Homogeneous groups}

\thanks{N.G. was supported in part by a SID Project: ``Nonlocal Sobolev and isoperimetric inequalities", Univ. of Padova, 2019, and by a BIRD grant: ``Aspects of nonlocal operators via fine properties of heat kernels", Univ. of Padova, 2022. D.V. was partially supported by the Advanced Research Projects Agency-Energy (ARPA-E), U.S. Department of Energy, under Award Number DE-AR0001202. The views and opinions of authors expressed herein do not necessarily state or reflect those of the United States Government or any agency thereof.}


\begin{abstract}
In this paper we establish the sharp asymptotic decay of positive solutions of the Yamabe type equation $\frlap u=u^{\frac{Q+2s}{Q-2s}}$ in a homogeneous Lie group, where $\frlap$ represents a suitable pseudodifferential operator modelled on a class of nonlocal operators arising in conformal CR geometry.
\end{abstract}

\author[]{Nicola Garofalo}
\author[]{Annunziata Loiudice }
\author[]{Dimiter Vassilev }

\maketitle

\section{Introduction}\label{S:intro}

The study of the CR Yamabe problem began with the celebrated works of Jerison and Lee \cite{JL1}-\cite{JL4}. The prototypical nonlinear PDE in this problem is
\[
\mathscr L u = u^{\frac{Q+2}{Q-2}},
\]
where $\mathscr L$ indicates the negative sum of squares of the  left-invariant vector fields generating the horizontal space   in the Heisenberg group $\Hn$ with real dimension $2n+1$, whereas $Q = 2n+2$ denotes the so-called homogeneous dimension associated with the non-isotropic group dilations\footnote{In this paper we always use the group law dictated by the Baker-Campbell-Hausdorff formula. When the Lie group is $\Hn$, or more in general a group of Heisenberg type, this choice obviously affects the expression of the horizontal Laplacian.}. In the present paper we are interested in the following nonlocal version of the above equation
\begin{equation}\label{main}
\mathscr L_s u = u^{\frac{Q+2s}{Q-2s}},
\end{equation}
where the fractional parameter $s\in (0,1)$, and $\mathscr L_s$ denotes a certain pseudodifferential operator which arises in conformal CR geometry. As an application of our main result we derive sharp decay estimates of nonnegative solutions of \eqref{main}.

The operator $\mathscr L_s$ in \eqref{main} was first introduced in \cite{BFM} via the spectral formula
\begin{equation}\label{bfm}
\mathscr L_s = 2^s |T|^s \frac{\G(-\frac 12\mathscr L |T|^{-1} + \frac{1+s}2)}{\G(-\frac 12 \mathscr L |T|^{-1} + \frac{1-s}2)},
\end{equation}
where $\G(x) = \int_0^\infty t^{x-1} e^{-t} dt$ denotes Euler gamma function. In \eqref{bfm} we have let $T = \p_\sigma$, where for a point $g\in \Hn$ we have indicated with $g = (z,\sigma)$ its logarithmic coordinates.
More in general, in a group of Heisenberg type $\bG$, with logarithmic coordinates $g= (z,\sigma)\in \bG$, where $z$ denotes the horizontal variable and $\sigma$ the vertical one, the pseudodifferential operator $\mathscr L_s$ is defined by the following generalisation of  \eqref{bfm}
\begin{equation}\label{bfmH}
\mathscr L_s = 2^s (-\Delta_\sigma)^{s/2} \frac{\G(-\frac 12\mathscr L (-\Delta_\sigma)^{-1/2} + \frac{1+s}2)}{\G(-\frac 12 \mathscr L (-\Delta_\sigma)^{-1/2} + \frac{1-s}2)},
\end{equation}
where $-\Delta_\sigma$ is the positive Laplacian in the center of the group, see \cite{RT}. Formulas \eqref{bfm}, \eqref{bfmH} should be seen as the counterpart of the well-known spectral representation $\widehat{(-\Delta)^s u} = (2\pi |\xi|)^{2s} \hat u$, where we have denoted by $\hat f$ the Fourier transform of a function $f$, see \cite[Chap. 5]{St}. An important fact, first proved for $\Hn$ in \cite{FGMT} using hyperbolic scattering, and subsequently generalised to any group of Heisenberg type in \cite{RT} using non-commutative harmonic analysis, is the following Dirichlet-to-Neumann characterisation of $\mathscr L_s$
\[
- \underset{y\to 0^+}{\lim} y^{1-2s} \p_y U((z,\sigma),y) = 2^{1-2s} \frac{\G(1-s)}{\G(s)}\mathscr L_s u(z,\sigma),
\]
where $U((z,\sigma),y)$ is the solution to a certain extension problem from conformal CR geometry very different from that of Caffarelli-Silvestre in \cite{CS}. Yet another fundamental fact, proved in
\cite[Proposition 4.1]{RTaim} and \cite[Theorem 1.2]{RT} for $0<s<1/2$, is the following remarkable M. Riesz type representation
\begin{equation}\label{riesz}
 \alpha(m,k,s)\  \mathscr L_s u(g)=\int_{\bG} \frac {u(g)-u(h)}{|h|^{Q+2s}}dh,
\end{equation}
where with $g = (z,\sigma)$, we have denoted by $|g| = |(z,\sigma)| = (|z|^4 + 16 |\sigma|^2)^{1/4}$ the non-isotropic gauge in a group of Heisenberg type $\bG$. Using the heat equation approach in \cite{GTaim}, \cite{GTinter}, formula \eqref{riesz} can be extended to cover the whole range $0<s<1$. In \eqref{riesz} the number $\alpha(m,k,s)>0$ denotes an explicit constant depending on $s$ and the dimensions $m$ and $k$ of the horizontal and vertical layers of the Lie algebra of $\bG$.
While by \eqref{bfm}, \eqref{bfmH} and the classical formula $\G(x+1) = x \G(x)$, it is formally almost obvious that in the limit as $s\nearrow 1$ the operator $\mathscr L_s$ tends to the horizontal Laplacian $\mathscr L$, we emphasise that, contrarily to an unfortunate misconception, when $\bG = \Hn$, or more in general it is of Heisenberg type, for no $s\in (0,1)$ does the standard fractional power
\begin{equation}\label{bala}
\mathscr L^s u(g) \overset{def}{=}  (-\mathscr L)^s u(g) = - \frac{s}{\G(1-s)} \int_0^\infty \frac{1}{t^{1+s}} \left(P_t u(g) - u(g)\right) dt
\end{equation}
coincide with the pseudodifferential operator defined by the left-hand side of \eqref{riesz}\footnote{in \eqref{bala} we have denoted by $P_t = e^{-t\mathscr L}$ the heat semigroup constructed in \cite{Fo}.}. Unlike their classical predecessors $(-\Delta)^s$, in the purely non-Abelian setting of $\Hn$ the pseudodifferential operators $\mathscr L^s$ in \eqref{bala} are \emph{not} CR conformally invariant, nor they have any special geometric meaning, while the operators  $\mathscr L_s$ are CR conformally invariant.  For these reasons,  we will refer to  the operator $\mathscr L_s$ as the \emph{geometric (or conformal)  fractional sub-Laplacian} even in the general setting of group of Heisenberg type, see \cite[Section 8.3]{FGMT} for relevant remarks in  the remaining non-Abelian groups of Iwasawa type.  Furthermore, it is not true that the fundamental solution $\mathscr E^{(s)}(z,\sigma)$ of $\mathscr L^s$ is a multiple of $|(z,\sigma)|^{2s-Q}$, see \cite[Theor. 5.1]{GTaim}. What is instead true, as proven originally by Cowling and Haagerup  \cite{CH89}, see also \cite[(3.10)]{RTaim},  and with a completely different approach based on heat equation techniques in  \cite[Theor. 1.2]{GTaim} (the reader should also see in this respect the works \cite{GTinter} and \cite{GTpotan}), is that the fundamental solution of the conformal  fractional sub-Laplacian $\mathscr L_s$ in \eqref{bfmH}  is given by
\begin{equation}\label{Esottos}
\mathscr E_{(s)}(z,\sigma) = \frac{C_{(s)}(m,k)}{|(z,\sigma)|^{Q-2s}},
\end{equation}
where
\[
C_{(s)}(m,k) = \frac{2^{\frac m2 + 2k-3s-1} \G(\frac 12(\frac m2+1-s)) \G(\frac 12(\frac m2 + k -s))}{\pi^{\frac{m+k+1}2} \G(s)}.
\]
It is worth emphasising here that, when $s\to 1$, one recovers from \eqref{Esottos} the famous formula for the fundamental solution of $-\mathscr L$, first found by Folland in \cite{Fobams} in $\Hn$, and subsequently generalised by Kaplan in \cite{Ka} to groups of Heisenberg type. Before proceeding we pause to notice that from the stochastic completeness and left-invariance of $P_t$ one tautologically obtains from \eqref{bala}
\[
\mathscr L^s u(g) = \frac {1}{2}\int_{\bG} \frac {2u(g)-u(gh)-u(gh^{-1})}{||h||_{(s)}^{Q+2s}}dh,
\]
where for $h\in \bG$ we have defined
\begin{equation}\label{taut}
\frac{1}{||h||_{(s)}^{Q+2s}} \overset{def}{=}  \frac{2s}{\G(1-s)} \int_0^\infty \frac{1}{t^{1+s}} p(h,t) dt,
\end{equation}
with $p(h,t)$ the positive heat kernel of $-\mathscr L$. While in the Abelian case $\bG = \Rn$, with Euclidean norm $|\cdot|$, an elementary explicit calculation in \eqref{taut}, based on the knowledge that $p(x,t) = (4\pi t)^{-\frac n2} e^{-\frac{|x|^2}{4t}}$, gives
\[
\frac{1}{||x||_{(s)}^{n+2s}} = \frac{s 2^{2s+1} \G(\frac{n+2s}2)}{\pi^{\frac n2}\G(1-s)} \frac{1}{|x|^{n+2s}},
\]
and one recovers from \eqref{bala} Riesz' classical representation, when $\bG = \Hn$ it is not true that the right-hand side of \eqref{taut} defines a function of the gauge $|g| = |(z,\sigma)| = (|z|^4 + 16 |\sigma|^2)^{1/4}$. In any group of Heisenberg type a close form explicit expression of the right-hand side of \eqref{taut} was computed in \cite[Theor. 5.1]{GTaim} (to obtain it one should change $s$ into $-s$ in that result),  starting from the following intertwined heat kernel formulas found in \cite{GTpotan}
\begin{equation}\label{beauty}
\mathscr K_{(\pm s)}((z,\sigma),t) = \frac{2^k}{\left(4\pi t\right)^{\frac m2+k}}\int_{\R^k} e^{-\frac it \langle \sigma,\la\rangle} \left(\frac{|\la|}{\sinh |\la|}\right)^{\frac m2+1\mp s}  e^{- \frac{|z|^2}{4t} \frac{|\la|}{\tanh |\la|}}d\la,\ \ \ \ 0<s\le 1.
\end{equation}
We emphasise that, when $s\nearrow 1$, the function $\mathscr K_{(+ s)}((z,\sigma),t)$ converges to the famous heat kernel discovered independently by Hulanicki \cite{Hu} and Gaveau \cite{Gav}.
The above mentioned \cite[Theor. 5.1]{GTaim}, and formula \eqref{riesz}, imply that $\mathscr L^s \not= \mathscr L_s$ for every $0<s<1$.

Formulas \eqref{riesz} and \eqref{Esottos} motivated the results in the present work. As we have mentioned, we are interested in optimal decay estimates for nonnegative subsolutions of \eqref{main}.   In this respect, \cite[Theorem 3.1]{RTaim}  and \cite[Theorem 3.7]{RT} gave the explicit form\label{uy} of a solution to  the fractional Yamabe equation on a group of Heisenberg type  as a consequence of the intertwining properties of $\mathscr L_s$ for $0 <s <n + 1$,  see also \cite{GTinter} for a different approach to intertwining based on the heat equation.  In the notation of \cite[Corollary 3.3]{GTinter},  the result is that if $\bG$ is of Heisenberg type, and $0<s<1$, then for every $(z,\sigma)\in\bG$, and $y>0$
one has the following intertwining identity
\begin{equation}\label{interi}
\mathscr L_s\left(((|z|^2+y^2)^2+16|\sigma|^2)^{-\frac{m + 2k - 2s}4}\right)=\frac{\G\left(\frac{m+2+2s}{4}\right)\G\left(\frac{m+2k+2s}{4}\right)}{\G\left(\frac{m+2-2s}{4}\right)\G\left(\frac{m+2k-2s}{4}\right)} (4y)^{2s}((|z|^2+y^2)^2+16|\sigma|^2)^{-\frac{m + 2k + 2s}4}.
\end{equation}
An immediate consequence of \eqref{interi} is that, for any real positive  number $y>0$, the function
\begin{equation}\label{uy}
u_y(z,\sigma)= \left(\frac{\G\left(\frac{m+2+2s}{4}\right)\G\left(\frac{m+2k+2s}{4}\right)}{\G\left(\frac{m+2-2s}{4}\right)\G\left(\frac{m+2k-2s}{4}\right)}\right)^{\frac{m + 2k - 2s}{4s}}\left(\frac{16y^2}{(|z|^2+y^2)^2+16|\sigma|^2}\right)^{\frac{m + 2k - 2s}4}
\end{equation}
is a positive solution of the nonlinear equation \eqref{main}.
We note that in the particular setting of the Heisenberg group $\mathbb{H}^n$ (which corresponds to the case $m = 2n$ and $k=1$) the function appearing in the left-hand side of \eqref{interi} defines, up to group translations, the unique extremal of the Hardy-Littlewood-Sobolev inequalities obtained by Frank and Lieb in \cite{FL}\footnote{We emphasise that letting $s\nearrow 1$ one recovers from \eqref{uy} the functions that, in the local case $s=1$, were shown to be the unique positive solutions of the CR Yamabe equation respectively in \cite{JL2}, for the Heisenberg group $\Hn$, and \cite{IMV}, for the quaternionic Heisenberg group. See also the important cited work \cite{FL}, and \cite[Theor. 1.1]{GV00} and \cite{GVduke} for partial results in groups of Heisenberg type.}

Whether in a group of Heisenberg type $\bG$ \emph{all} nonnegative solutions of \eqref{main} are, up to left-translations,  given by  \eqref{uy} presently remains a challenging open question. A first step in such problem is understanding the optimal decay of nonnegative solutions to \eqref{main}. Keeping in mind that  the number $m+2k$ in \eqref{uy} represents the homogeneous dimension $Q$ of the group $\bG$, by setting the scaling factor $y=1$, we see that there exists a universal constant $C>0$ such that
\[
u_1(z,\sigma)\le \frac{C}{|(z,\sigma)|^{Q-2s}}.
\]
It is thus natural to guess that the optimal decay of all nonnegative solutions to \eqref{main} should be dictated by \eqref{Esottos}, i.e., by the fundamental solution of $\mathscr L_s$. In Theorem \ref{t:asympt yamabe} below we prove that this guess is correct.

To facilitate the exposition of the ideas and underline the general character of our approach, in this paper we have chosen to work in the setting of homogeneous Lie groups $\bG$ with dilations $\{\delta_\la\}_{\la >0}$, as in the seminal monograph of Folland and Stein \cite{FS82}. We emphasise that such groups encompass the stratified, nilpotent Lie groups in \cite{Eli}, \cite{Fo} and \cite{FS82} (but they are a strictly larger class). In particular, our results include Lie groups of Iwasawa type for which \eqref{main} becomes significant in the case of pseudo-conformal CR and quaternionic contact geometry. We shall assume throughout that $|\cdot|$ is a fixed homogeneous norm in $\bG$, i.e., $g\mapsto |g|$ is a continuous function on $\bG$ which is $C^\infty$ smooth on $\bG\setminus \{e\}$, where $e$ is the group identity, $ |g|=0$ if and only if $g=e$,  and  for all $g\in\bG$ we have
\begin{equation}\label{e:homog norm}
(i) \ |g^{-1}|=|g|; \quad (ii)\ |\delta_\lambda g|=\lambda|g|.
\end{equation}
Finally, we shall assume that the fixed norm satisfies the triangle inequality
\begin{equation}\label{e:triangle ineqs}
 |g\cdot h|\leq |g|+|h|, \qquad g,h\in \bG.
\end{equation}
We stress that, according to \cite{HS90}, any homogeneous group allows a norm which satisfies the triangle inequality\footnote{It is well-known that in a group of Heisenberg type the anisotropic gauge $|g| = |(z,\sigma)| = (|z|^4 + 16 |\sigma|^2)^{1/4}$ satisfies (i)-(iii), see \cite{Cy}.}. We shall denote with
\[
B_R(g)\equiv B(g,R)=\{h\, \mid\, |g^{-1}\circ h|<R\}
\]
the resulting open balls with center $g$ and radius $R$.

Motivated by the above result \eqref{riesz}, for $1\le p < \infty$ and $0<s<1$ we consider the Banach space $\mathcal{D}^{s,p}(\bG)$ defined as the closure of the space of functions $u\in \mathcal{C}^\infty_0 (\bG)$ with respect to the norm
\begin{equation}\label{semi}
||u||_{\mathcal{D}^{s,p}(\bG)}= [u]_{s,p} = \left(\int_{\bG}\int_{\bG}\frac {|u(g)-u(h)|^p}{|g^{-1}\cdot h|^{Q+ps}} dg dh\right)^{1/p} < \infty.
\end{equation}
We are particularly interested in the case $p=2$.  In this  case, the Euler-Lagrange equation of \eqref{semi} involves the following left-invariant nonlocal operator, initially defined on functions $u\in C^\infty_0(\bG)$
\begin{equation}\label{e:frac L via hypersing}
\frlap u(g)=
\frac {1}{2}\int_{\bG} \frac {2u(g)-u(gh)-u(gh^{-1})}{|h|^{Q+2s}}dh
=\lim_{\eps\rightarrow
0}\int_{\bG\setminus B(g,\eps)} \frac {u(g)-u(h)}{|g^{-1}\cdot h|^{Q+2s}}dh,
\end{equation}
 see \cite{GLV22p} for a general construction of the fractional operator $\frlap$  on the Dirichlet space $\mathcal{D}^{s,2}(\bG)$ and relevant Sobolev-type embedding results.
In \eqref{e:frac L via hypersing}, and hereafter in this work, the number $Q>0$ represents the homogeneous dimension of $\bG$ associated with the group dilations $\{\delta_\la\}_{\la >0}$. It is clear from \eqref{riesz} that, when $\bG$ is of Heisenberg type, the nonlocal operator $\frlap$ defined using the Koranyi gauge is just a multiple of $\mathscr L_s$ in \eqref{bfmH}, and this provides strong enough motivation to work with \eqref{e:frac L via hypersing}. A second motivation comes from
\cite[Theor. 1.2]{GLV22p}, in which we prove that, if $X_1,...,X_m$ are the left-invariant vector fields of homogeneity one with associated coordinates $x_j$, and the fixed homogeneous norm $|g|$ is a spherically symmetric function of the coordinates $(x_1,\dots,x_m)$, then for a function $u\in C^\infty_0(\bG)$ we have the identities
\begin{equation}\label{e:limits frac operator}
\lim_{s\rightarrow 0^+} \frac {2s}{\sigma_Q}\frlap u(g)=-u(g) \quad\text{and}\quad \lim_{s\rightarrow 1^-} \frac {4m(1-s)}{\tau_m}\frlap u(g)=-\sum_{i=1}^m X_i^2u(g),
\end{equation}
where $\sigma_Q, \tau_m>0$ are suitable universal constants. Throughout the paper, for  $0<s<1$ we let
\begin{equation}\label{e:2star def}
2^*(s)\overset{def}{=}\frac{2Q}{{Q-2s}} \qquad \text{and}\qquad (2^*(s))'{=}\frac{2Q}{{Q+2s}},
\end{equation}
so that $2^*(s)$ {equals} the Sobolev exponent associated to the fractional Sobolev inequality \cite[Thorem 1.2]{GLV22p}
\begin{equation}\label{e:fracsobolev}
\left ( \int_{\bG} |u|^{2^*(s)}(g){dg}\right)^{1/2^*(s)}\leq S
\left(\int_{\bG}\int_{\bG} \frac {|u(g)-u(h)|^2}{|
h^{-1}\cdot g|^{Q+2s}}\, {dg}dh \right )^{1/2},
\end{equation}
and $(2^*(s))'$ is its H\"older conjugate.
In addition to the fractional Sobolev exponent $2^*(s)$, the following exponents will be used
\begin{equation}\label{e:r def}
 r\overset{def}{=}\frac {2^*(s)}{2}=\frac {Q}{Q-2s}\qquad\text{and}\qquad r'=\frac{r}{r-1} = \frac {Q}{2s}.
\end{equation}

With all this being said, we are ready to state our results. The first one concerns the nonlocal
Schr\"odinger type equation \eqref{e:fracschro} below. For the notion of subsolution to such equation, see \eqref{e:weak formulation} below.

\begin{thrm}\label{t:loc bound} Let $\bG$ be a homogeneous group.
Let $u\in \mathcal{D}^{s,2}(\bG)$ be a nonnegative subsolution to the equation
\begin{equation}\label{e:fracschro}
\frlap u=Vu.
\end{equation}
Suppose the following conditions hold true:
\begin{enumerate}[i)]
\item \label{a:Lp prop of V}for some $t_0>r'= \frac{Q}{2s}$ we have $V\in L^{r'}(\bG)\cap L^{t_0}(\bG)$ ;
\item \label{a:decay of V} there exist $\bar R_0$ and $K_0$ so that for $R>\bar R_0$ we have
\begin{equation}\label{e:decay condn}
\int_{\{|g|>R\}} |V(g)|^{t_0}{dg}\leq \frac {K_0}{R^{2st_0-Q}}.
\end{equation}
\end{enumerate}
Then there exists a constant $C>0$, depending on $Q$, $s$ and $K_0$, such that for all  $g_0\in \bG$ with $|g_0|=2R_0\geq 4\bar R_0$,  we have for $0<R\leq R_0$
\begin{equation}\label{e:loc bound}
\sup_{B(g_0,R/2)} u\ \leq \ C  \fint_{B(g_0,R)} u \ +\ {C}T(u; g_0,R/2),
\end{equation}
where the ``tail"  is given by
\begin{equation}\label{e:tail def}
T(u;g_0,R)=R^{2s}\int_{| g_0^{-1}\cdot h|>R}\frac {u(h)}{|g_0^{-1}\cdot h|^{Q+2s}}dh.
\end{equation}
\end{thrm}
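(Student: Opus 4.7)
The plan is a nonlocal De Giorgi--Nash--Moser iteration adapted to the homogeneous-group setting and to the presence of the Schr\"odinger potential $V$, in the spirit of Di Castro--Kuusi--Palatucci. The starting point is the weak form of the subsolution inequality
\[
\int_{\bG}\!\!\int_{\bG} \frac{(u(g)-u(h))(\varphi(g)-\varphi(h))}{|g^{-1}\cdot h|^{Q+2s}}\,dg\,dh \;\leq\; \int_{\bG} V\,u\,\varphi\,dg,
\]
valid for every nonnegative $\varphi \in \mathcal{D}^{s,2}(\bG)$. For $\beta\geq 1$, $\delta>0$, $M>0$, I would test with $\varphi=\eta^{2}\bar u_M^{2\beta-1}$, where $\bar u=u+\delta$, $\bar u_M=\min(\bar u,M)$, and $\eta$ is a smooth cutoff equal to $1$ on $B(g_0,\rho)$, supported in $B(g_0,(\rho+R)/2)$, satisfying $|\eta(g)-\eta(h)|\leq C(R-\rho)^{-1}|g^{-1}\cdot h|$ on its support. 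A pointwise convexity-type inequality relating $(a-b)(a^{2\beta-1}\eta_a^2-b^{2\beta-1}\eta_b^2)$ to $(a^{\beta}\eta_a-b^{\beta}\eta_b)^2$ modulo an error in $(a^{2\beta}+b^{2\beta})(\eta_a-\eta_b)^2$ then produces the Caccioppoli-type bound
\[
[\eta\,\bar u_M^{\beta}]_{s,2}^{2} \;\leq\; C\beta^{2}\!\iint \frac{(\eta(g)-\eta(h))^2}{|g^{-1}\cdot h|^{Q+2s}}\bar u_M^{2\beta}(g)\,dg\,dh \;+\; C\beta\!\int V\,u\,\eta^{2}\bar u_M^{2\beta-1}\,dg,
\]
where the far-field part of the double integral (one point outside $B(g_0,R)$) is absorbed into a multiple of $T(u;g_0,R/2)^{2\beta}$ via Young's inequality.

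The second step is to apply the fractional Sobolev inequality \eqref{e:fracsobolev} to $\eta\,\bar u_M^{\beta}$, producing $\|\eta\,\bar u_M^{\beta}\|_{L^{2^*(s)}}^{2}$ on the left, and to treat the potential term by H\"older with exponents $t_0$ and $t_0'=t_0/(t_0-1)$ followed by interpolation of $L^{2t_0'}$ between $L^{2}$ and $L^{2^*(s)}$. The crucial quantitative input is that the location $|g_0|=2R_0\geq 4\bar R_0$ together with $R\leq R_0$ forces $B(g_0,R)\subset\{|h|>R_0\}$, so that \eqref{e:decay condn} yields $\|V\|_{L^{t_0}(B(g_0,R))}\leq K_0^{1/t_0}R^{Q/t_0-2s}$; the strict inequality $t_0>r'=Q/(2s)$ makes the exponent $Q/t_0-2s$ strictly negative, which is exactly the dimensional slack needed to absorb the $L^{2^*(s)}$-piece of the interpolation into the Sobolev term on the left. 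One thereby obtains, for every $\beta\geq 1$,
\[
\|\bar u_M\|_{L^{2^*(s)\beta}(B(g_0,\rho))} \;\leq\; \bigl(C\beta\bigr)^{1/\beta}\,(R-\rho)^{-s/\beta}\,\bigl(\|\bar u_M\|_{L^{2\beta}(B(g_0,R))} + T(u;g_0,R/2)\bigr).
\]

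Letting $M\to\infty$ and $\delta\to 0$, I would iterate this reverse-H\"older inequality with $\beta_k=(Q/(Q-2s))^{k}$ and radii $R_k=R/2+2^{-k-1}R$; summing the resulting geometric series of logarithmic constants gives $\sup_{B(g_0,R/2)} u\leq C\bigl(\fint_{B(g_0,R)} u^{2}\bigr)^{1/2}+C\,T(u;g_0,R/2)$, and a standard Giaquinta-type interpolation (writing $u^{2}\leq(\sup u)\cdot u$ and absorbing into the iteration scheme) converts the $L^{2}$-average to the $L^{1}$-average in \eqref{e:loc bound}. The principal obstacle is the nonlocal commutator $(\eta(g)-\eta(h))^2$: it simultaneously produces the Caccioppoli error and the long-range tail, and both contributions must be estimated uniformly in $\beta$, $M$, and $\delta$ for the iteration constants to summate. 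A related delicate point is matching the scaling $R^{Q/t_0-2s}$ coming from hypothesis \eqref{e:decay condn} against the $R^{-2s}$ produced by the Caccioppoli inequality so that the reverse-H\"older constants depend only on $Q$, $s$ and $K_0$ and not on $R$; this is precisely what the dual assumptions $t_0>r'$ and \eqref{e:decay condn} are arranged to accomplish.
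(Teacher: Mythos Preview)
Your proposal is correct and follows essentially the same route as the paper: a Caccioppoli inequality (the paper imports the one of Brasco--Parini), the localized fractional Sobolev inequality, absorption of the potential term via H\"older/Young using hypothesis \eqref{e:decay condn} and the inclusion $B(g_0,R)\subset\{|g|>R_0\}$, Moser iteration, and a final lowering of the exponent from $L^2$ to $L^1$. The only technical differences are that the paper handles the tail by choosing $\delta=T(u;g_0,R/2)$ in the shift $u_\delta=u+\delta$ (so that the factor $1+T/\delta$ becomes $2$) rather than by your Young-inequality separation, and it treats the $V$-term with a three-exponent H\"older split $(t_0,t,\kappa)$ followed by Young rather than H\"older in $t_0,t_0'$ plus interpolation between $L^2$ and $L^{2^*(s)}$; both variants are equivalent in effect.
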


We note that the potential $V$ in \eqref{e:fracschro} is not
assumed to be radial (i.e., a function of the norm $|\cdot|$), or controlled by a power of $u$. The hypothesis \eqref{e:decay condn} goes
back to the work \cite{Bando89}, see also \cite{V11err} where a similar
assumption was used in the case of Schr\"{o}dinger type equations
modelled on the equations for the  extremals to Hardy-Sobolev
inequalities with polyradial symmetry.
For other results about the Schr\"{o}dinger equation see
\cite{FLS16}.

Our second result is the following  theorem in which we establish the sharp asymptotic decay of weak nonnegative subsolutions to the fractional Yamabe  type equation \eqref{main}. The result applies to weak solutions of $\frlap u=|u|^{2^*(s)-2} u$, since then $|u|$ is a weak subsolution of the Yamabe type equation.

\begin{thrm}\label{t:asympt yamabe}
 Let $\bG$ be a homogeneous group of homogeneous dimension $Q$ and $0<s<1$. If $u\in \mathcal{D}^{s,2}(\bG)$ is a nonnegative subsolution to the nonlocal Yamabe type equation
\begin{equation}\label{e:frac yamabe}
\frlap u=u^{\frac{Q+2s}{Q-2s}},
\end{equation}
then $|\cdot|^{{Q-2s}}\,u\in L^\infty(\bG)$.
\end{thrm}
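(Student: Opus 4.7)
Proof plan. Apply Theorem \ref{t:loc bound} to the nonnegative subsolution $u$, viewing \eqref{e:frac yamabe} as $\frlap u\le V u$ with the potential $V := u^{2^*(s)-2}$, and then bootstrap via the structure of the equation. By the fractional Sobolev embedding \eqref{e:fracsobolev}, $u\in L^{2^*(s)}(\bG)$, and an elementary calculation from \eqref{e:2star def}--\eqref{e:r def} shows that $(2^*(s)-2)\,r' = 2^*(s)$, so $V\in L^{r'}(\bG)$. Hypothesis \ref{a:Lp prop of V} for some $t_0>r'$, and hypothesis \ref{a:decay of V}, will follow once one shows that $u$ is locally bounded with $u(g)\to 0$ as $|g|\to\infty$: a Moser-type iteration on balls $B(g_0,R)$ with $|g_0|$ large yields the preliminary estimate $u(g)\le C\,|g|^{-(Q-2s)/2}\,\|u\|_{L^{2^*(s)}(\{|h|>|g|/2\})}$, where the smallness required in the critical exponent case is automatic since $\|u\|_{L^{2^*(s)}(B(g_0,R))}\to 0$ as $|g_0|\to\infty$. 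In particular $V$ has pointwise decay of order $|g|^{-2s}$ times a vanishing factor, from which both \ref{a:Lp prop of V} and \eqref{e:decay condn} follow.

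Define
\[
\tilde\epsilon(R) := \sup_{|g|>R} |g|^{(Q-2s)/2}\,u(g),
\]
which is finite and satisfies $\tilde\epsilon(R)\to 0$ by the preliminary estimate. Using positivity of the fundamental solution of $\frlap$ (which, by homogeneity under $\{\dell\}_{\lambda>0}$, is of order $|g|^{-(Q-2s)}$; cf.\ \eqref{Esottos}), for a nonnegative subsolution of \eqref{e:frac yamabe} one obtains the Riesz-type pointwise inequality
\[
u(g)\le C\int_{\bG} \frac{u(h)^{2^*(s)-1}}{|g^{-1}\cdot h|^{Q-2s}}\,dh.
\]
Splitting this integral into (i) the region $|g^{-1}\cdot h|<|g|/2$ (so $|h|\sim |g|$), (ii) a fixed compact ball around the identity where $u$ is bounded, and (iii) the remaining intermediate/far part treated by annular decomposition, and inserting in each case the available pointwise bound $u(h)\le \tilde\epsilon(|h|)/|h|^{(Q-2s)/2}$, one arrives at the nonlinear recursive inequality
\[
\tilde\epsilon(R) \le C\,\tilde\epsilon(R/4)^{2^*(s)-1} + C\,R^{-(Q-2s)/2}.
\]

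Since $2^*(s)-1>1$ and $\tilde\epsilon(R)\to 0$, for $R_0$ sufficiently large $C\,\tilde\epsilon(R_0)^{2^*(s)-2}<1$; iterating the inequality $n$ times from scale $R_0$ gives $\tilde\epsilon(R)\le C_n\,\tilde\epsilon(R_0)^{(2^*(s)-1)^n} + C\,R^{-(Q-2s)/2}$, whose first summand is super-exponentially small in $n$, so eventually $\tilde\epsilon(R)\le C\,R^{-(Q-2s)/2}$, equivalently $u(g)\le C\,|g|^{-(Q-2s)}$, as asserted. The main obstacle is the derivation of the recursive inequality: in the intermediate range (iii) the annular decomposition must exploit the decreasing property of $\tilde\epsilon$ so that only $\tilde\epsilon(|h|)$--not $\tilde\epsilon$ frozen at the inner cutoff--enters the bound, allowing the geometric sum to produce the clean $\tilde\epsilon(R/4)^{2^*(s)-1}\,R^{-(Q-2s)/2}$ structure alongside the inhomogeneous term $C\,R^{-(Q-2s)/2}$ that originates from the compact region (ii).
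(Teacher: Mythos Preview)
Your proposal has two genuine gaps. First, the Riesz-type pointwise inequality
\[
u(g)\le C\int_{\bG} \frac{u(h)^{2^*(s)-1}}{|g^{-1}\cdot h|^{Q-2s}}\,dh
\]
is not available in the generality of the theorem. You invoke \eqref{Esottos}, but that formula is only established for the conformal operator $\mathscr L_s$ on groups of Heisenberg type, whereas Theorem~\ref{t:asympt yamabe} is stated for an \emph{arbitrary} homogeneous group and the operator $\frlap$ built from an arbitrary homogeneous norm. Even in the Heisenberg-type case, passing from ``$u$ is a subsolution'' to ``$u$ is dominated by the Riesz potential of $u^{2^*(s)-1}$'' requires a global comparison/maximum principle argument that you do not supply. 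Second, even granting the Riesz bound, your recursive inequality does not follow from the decomposition you describe. With region~(ii) taken as a \emph{fixed} compact ball, the intermediate annulus $R_0\le |h|\le R/4$ in region~(iii) contributes, after the annular decomposition and multiplication by $R^{(Q-2s)/2}$, a term of size
\[
R^{-(Q-2s)/2}\sum_{j}\tilde\epsilon(2^jR_0)^{2^*(s)-1}(2^jR_0)^{(Q-2s)/2},
\]
and since $(2^jR_0)^{(Q-2s)/2}$ grows geometrically while you only know $\tilde\epsilon\to 0$ (no polynomial rate), this sum is of order a \emph{constant}, not $\tilde\epsilon(R/4)^{2^*(s)-1}$. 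The ``decreasing property of $\tilde\epsilon$'' does not rescue this: the sum is dominated by its inner terms, not its outer ones. Consequently the recursion degenerates to $\tilde\epsilon(R)\le C+\ldots$, and the bootstrap never starts.

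The paper takes a different route that sidesteps both issues. The key missing ingredient is the borderline Lorentz regularity $u\in L^{r,\infty}(\bG)$ with $r=2^*(s)/2=Q/(Q-2s)$, proved by testing the equation with $F_t(u)=\min\{u,t\}$ and using only the fractional Sobolev inequality. This single estimate does all the work: by H\"older's inequality in Lorentz spaces it yields both $\fint_{B(g_0,R)}u\le C R^{-(Q-2s)}$ and $T(u;g_0,R)\le C R^{-(Q-2s)}$ directly. One then applies Theorem~\ref{t:loc bound} (whose hypotheses on $V=u^{2^*(s)-2}$ are verified via the slow decay, exactly as you outline) to conclude. No Riesz representation, no nonlinear recursion.
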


We mention that in \cite[Theor. 1.1]{BMS16} the authors established, in the setting of $\Rn$, the sharp asymptotic behaviour of the spherically symmetric extremals for the fractional $L^p$ Sobolev inequality, i.e., for the radial nonnegative solutions in $\mathbb{R}^n$  of the equation with critical exponent
\[
(-\Delta_p)^s u =u^{\frac {n(p-1)+sp}{n-sp}},
\]
where $0<s<1$, $1<p<\frac{n}s$, see also \cite{MM19}. However, both \cite{BMS16} and \cite{MM19} use in a critical way the monotonicity and radial symmetry of the solutions  in order to derive the asymptotic behaviour from the regularity of $u$ in the weak space $L^{r,\infty}$, where $r = \frac{n(p-1)}{n-sp}$.
As it is well-known, in the Euclidean setting one can use radially decreasing rearrangement or the moving plane method to establish monotonicity and radial symmetry of solutions to variational problems and partial differential equations. These tools are not available in Carnot groups and proving the relevant symmetries of similar problems remains a very challenging task.

The result of Theorem \ref{t:asympt yamabe} does not rely on the symmetry of the solution, hence the method of proof is new even in the Euclidean setting.
In order to obtain the optimal decay Theorem \ref{t:asympt yamabe} without relying on symmetry of the solution, we use a version of the local boundedness estimate given in Theorem \ref{t:loc bound} and then obtain a new estimate of the tail term, which is particular for the fractional case.

In closing, we provide a brief description of the paper. In Section \ref{S:hom} we introduce the geometric setting of the paper and the relevant definitions. We also prove Proposition \ref{p:Lpregul}, a preparatory result which provides regularity in $L^p$ spaces for subsolutions of fractional Schr\"odinger equations. In Section \ref{s:proof of loc bound} we prove Theorem \ref{t:loc bound}. Finally, in Section \ref{S:main} we prove Theorem  \ref{t:asympt yamabe}.


\section{Homogeneous groups and fractional operators}
\label{S:hom}

This section is devoted to providing the necessary background and stating a preliminary result, Proposition \ref{p:Lpregul} below. Let $\bG$ be  a homogeneous group as defined in  \cite[Chapter 1]{FS82}. In particular, $(\bG,\circ)$ is a connected simply connected nilpotent Lie group. Furthermore, the exponential map $\exp:\algg\rightarrow \bG$ is a diffeomorphism of the Lie algebra $\algg$ onto the group $\bG$ and $\algg$ is endowed with a family of non-isotropic group dilations $\delta_\lambda$ for $\lambda>0$. Explicitly, there is a basis $X_j$, $j=1,\dots,n$ of the Lie algebra $\algg$ and positive real numbers $d_j$, such that,
\[
1=d_1\leq d_2\leq\dots\leq d_n \quad \text{and}\quad
\delta_\lambda X_j=\lambda^{d_j} X_j,
\]
which, using the exponential map, define 1-parameter family of automorphisms of the group $\bG$ given by $\exp\circ\ \delta_\lambda\circ\exp^{-1}$. We will use the same notation $\delta_\lambda$ for the group automorphisms. As customary, we indicate with
\[
Q=d_1+\dots+d_n
\]
the homogeneous dimension of $\bG$ with respect to the nonisotropic dilations $\delta_\lambda$.
We will denote with $\volg$ a fixed Haar measure given by the push forward of the Lebesgue measure on the Lie algebra via the exponential map, see \cite[Proposition 1.2]{FS82}. We note that this gives a bi-invariant Haar measure.
 Furthermore, the homogeneous dimension and the Haar measure measure are related by the identity  $d(\delta_\lambda g)=t^Q\, dg$.

The polar coordinates formula for the Haar measure gives the existence of a unique Radon measure $d\sigma(g)$, such that, for $u\in L^1(\bG)$  we have the  identity,  \cite[Prop. (1.15)]{FS82},
\begin{equation}\label{e:polar coordinates integral}
\int_{\bG} u(g)dg=\int_0^\infty\int_{|g|=1} u(\delta_r g) r^{Q-1}d\sigma(g)dr.
\end{equation}
In particular, we have, see \cite{FR66},
\begin{equation}\label{e:polar coord}
\int_{r<|g|<R}|g|^{-\gamma}\volg=\left\{
                  \begin{array}{ll}
                    \frac {\sigma_Q}{Q-\gamma}\left(R^{Q-\gamma} - r^{Q-\gamma} \right), & \gamma\not=Q\\
                    &\\
                    \sigma_Q\log(R/r), & \gamma=Q,
                  \end{array}
                \right.
\end{equation}
where $\sigma_Q = Q \omega_Q$, and $\omega_Q = \int_{B_1} dg>0$.

\subsection{The fractional operator}\label{s:fractional operator}
For $0<s<1$ consider the quadratic form
\[
\mathscr{Q}_s(u,\phi)\overset{def}{=}
\int_{\bG}\int_{\bG} \frac
{(u(g)-u(h))(\phi(g)-\phi(h))}{|g^{-1}\cdot h|^{Q+2s}}
{dg}dh.
\]
Following \cite{GLV22p}, we let $\mathcal{D}^{s,2}(\bG)$ be the  fractional Sobolev space defined as the closure of $C^\infty_0(\bG)$ with respect to the case $p=2$ of the seminorm \eqref{semi}, i.e.
\begin{equation}\label{e:def frac norm}
[u]_{s,2} = \mathscr{Q}_s(u,u)^{1/2} = \left(\int_{\bG}\int_{\bG}\frac {|u(g)-u(h)|^2}{|g^{-1}\cdot h|^{Q+2s}}\, {dg}dh\right)^{1/2}.
\end{equation}
The infinitesimal generator of the quadratic form $\mathscr{Q}_s(u,\phi)$ is the nonlocal operator $\frlap$ defined in \eqref{e:frac L via hypersing} above. By a weak solution  of the equation $\frlap u =F$ we intend a function $u\in \mathcal{D}^{s,2}(\bG)$ such that for any $\phi\in C^\infty_0(\bG)$ one has
\begin{equation}\label{e:weak formulation}
\mathscr{Q}_s(u,\phi) =
\int_{\bG}\int_{\bG} \frac
{(u(g)-u(h))(\phi(g)-\phi(h))}{|g^{-1}\cdot h|^{Q+2s}}\,
{dg}dh=\int_{\bG}F(g)\phi(g){dg}.
\end{equation}
Weak subsolutions are defined by requiring $$\mathscr{Q}_s(u,\phi) \leq \int_{\bG}F(g)\phi(g){dg}$$ for all nonnegative test functions $\phi$.
As shown in  \cite[Theorem 1.1]{GLV22p}, this is equivalent  to defining the fractional operator $\frlap$ by the formula \eqref{e:frac L via hypersing}. As we have underlined in Section \ref{S:intro}, besides the Euclidean case $\bG = \Rn$, in a Lie group of Heisenberg type, equipped with the Koranyi norm, definition \eqref{e:frac L via hypersing} equals, up to a multiplicative constant, the fractional powers of the conformally invariant (or geometric) horizontal Laplacian defined by \eqref{bfmH} above, see \cite{FGMT}, \cite{RT} and \cite{GTaim}.

\subsection{A preparatory result on Lebesgue space regularity}\label{s:proof prop}

In the proof of Theorem \ref{t:loc bound} we will need  the following regularity in Lebesgue spaces involving the fractional operator \eqref{e:frac L via hypersing}. In its proof we adapt the arguments that in the local case were developed in \cite[Lemma
10.2]{GV00}, \cite[Theor. 4.1]{V06} and \cite[Theor. 2.5]{V11}, except that in the nonlocal case one has to use the
Sobolev inequality \eqref{e:fracsobolev}, rather than the
 Folland-Stein embedding $\mathcal{D}^{1,2}(\bG) \hookrightarrow L^{\frac{2Q}{Q-2}}(\bG)$. As far as part (b) of Proposition \ref{p:Lpregul} below is concerned, in addition to the cited references we also mention \cite[Sec. 4]{GL92}, where a similar result was proved for $L^2$ solutions, and \cite[Lemma 2.3]{LU}, for a closely related result concerning the Yamabe equation on the Heisenberg group $\Hn$. In the local case in $\Rn$, a sharp Lorentz space result was obtained for solutions to equations modelled on Yamabe type equations, or more generally for the Euler-Lagrange equation related to the $L^p$ Sobolev inequality. This type of result originated with the work \cite{JS99}, and was subsequently used to obtain the sharp $L^p$ regularity and the asymptotic behaviour  for solutions of such equations,  see \cite[Lemma 2.2]{Vet16} and \cite[Proposition 3.3]{BMS16}. These  results were extended to Yamabe type equations in Carnot groups in { \cite[Theorem 1.1 and Proposition 3.2]{Loi19}}.  We mention that, since we work in the more general setting of a Schr\"odinger type equation, in Proposition \ref{p:Lpregul}  (b) below we do not obtain a borderline $L^{r,\infty}(\bG)$ Lorentz regularity for the considered non-negative subsolutions, instead, we show that  $u\in L^{q}(\bG)$ for  $r=\frac {2^*(s)}{2}<q< \infty$.
For the statement of the next proposition, the reader should keep in mind the definition \eqref{e:r def} of the exponents $r$ and $r'$.

\begin{prop}\label{p:Lpregul} Let $\bG$ be a homogeneous group and suppose that $u\in \mathcal{D}^{s,2}(\bG)$ be a nonnegative subsolution to the nonlocal equation
\begin{equation}\label{e:fracschro1}
\frlap u=Vu,
\end{equation}
with $V\in L^{r'}(\bG)$, i.e., for every $\phi\in C^\infty_0(\bG)$ such that $\phi\ge 0$ one has
\begin{equation}\label{e:weak formulation}
\mathscr{Q}_s(u,\phi) =
\int_{\bG}\int_{\bG} \frac
{(u(g)-u(h))(\phi(g)-\phi(h))}{|g^{-1}\cdot h|^{Q+2s}}\,
{dg}dh\leq \int_{\bG} V(g)u(g)\phi(g){dg}.
\end{equation}
\begin{itemize}
\item[(a)]  We have $u\in L^q(\Om)$ for every $2^*(s)\ \leq\ q\ <\ \infty.$ Furthermore, for any $2^*(s)<q<\infty$ there exist a constant $C_Q>0$, such that for all sufficiently large $M$ for which
\begin{equation}\label{e:basic Lebesgue bound}
\left(\int_{|V|>M} V^{r'}dg \right)^{1/r'}\leq \frac {1}{qC_Q},
\end{equation}
one has
\[
\norm{u}_{L^q(\bG)}\leq (qC_Q M)^{1/q}\norm
{u}_{\mathcal{D}^{s,2}(\bG)}.
\]
\item[(b)] In fact, it holds  $u\in L^{q}(\bG)$ for  $r=\frac {2^*(s)}{2}<q< \infty$.
\item[(c)]  If, in addition,  $V\in   L^{t_0}(\bG)$ for some  $t_0\ >\ r'$, then $u\in L^{q}(\bG)$ for  $r=\frac {2^*(s)}{2}<q\leq \infty$. In addition, the sup of $u$ is estimated as follows,
\[
\norm{u}_{L^\infty(\bG)} \  \leq \ C_Q \, \norm{V}_{L^{t_0}(\bG)}^{\frac {t_0'r}{r-t_0'}} \, \norm{u}_{L^{2^*(s)t_0'}(\bG)},
\]
where $t_0'$ is the H\"older conjugate to $t_0$ and $C_Q$ is a constant depending on the homogeneous dimension.
\end{itemize}
\end{prop}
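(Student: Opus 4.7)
The plan is to implement a Moser-type $L^q$ iteration built on the pointwise algebraic inequality
\[
(a-b)(a^\beta - b^\beta)\ \geq\ \tfrac{4\beta}{(\beta+1)^2}\bigl(a^{(\beta+1)/2} - b^{(\beta+1)/2}\bigr)^2,\qquad a,b\geq 0,\ \beta>0.
\]
Applied with $a=u(g)$, $b=u(h)$ and integrated against $|g^{-1}\cdot h|^{-(Q+2s)}$, it yields $\mathscr{Q}_s(u,u^\beta)\geq \tfrac{4\beta}{(\beta+1)^2}[u^{(\beta+1)/2}]_{s,2}^2$. Inserting $\phi=u^\beta$ into the subsolution inequality \eqref{e:weak formulation} (legitimate after the customary truncation $u\mapsto\min(u,L)$ and passage to the limit $L\to\infty$, since $u\in\mathcal{D}^{s,2}(\bG)$), and combining with the fractional Sobolev inequality \eqref{e:fracsobolev} applied to $u^{(\beta+1)/2}$, produces the master estimate, for $q=\beta+1$,
\[
\tfrac{4(q-1)}{q^2 S^2}\,\|u\|_{L^{qr}(\bG)}^q\ \leq\ \int_{\bG} V\,u^q\,dg.
\]

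For part (a), I would split $V = V_M + V^M$ with $V_M = V\,\chi_{\{|V|\leq M\}}$. Trivially $\int V_M u^q \leq M\|u\|_{L^q(\bG)}^q$, while H\"older with exponents $(r,r')$ gives $\int V^M u^q \leq \|V^M\|_{L^{r'}(\bG)}\|u\|_{L^{qr}(\bG)}^q$. Hypothesis \eqref{e:basic Lebesgue bound} is precisely the smallness of $\|V^M\|_{L^{r'}}$ needed to absorb the second term into the LHS, yielding the iteration step $\|u\|_{L^{qr}(\bG)}\leq (C_Q q M)^{1/q}\|u\|_{L^q(\bG)}$. Starting at $q_0 = 2^*(s)$ with the base bound $\|u\|_{L^{2^*(s)}(\bG)}\leq S\|u\|_{\mathcal{D}^{s,2}(\bG)}$ from \eqref{e:fracsobolev}, and iterating finitely many times along a sequence $q_j$ chosen so that the final output exponent equals the desired $q$, one reaches any $q<\infty$; because $\prod_j (C_Q q_j M)^{1/q_j}$ converges along the geometrically growing $q_j$, the accumulated constant gives the quantitative bound stated in (a).

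For part (c), the stronger hypothesis $V\in L^{t_0}(\bG)$ with $t_0>r'$ replaces the borderline H\"older step: the conjugate exponent satisfies $t_0'<r$, so instead $\int V u^q \leq \|V\|_{L^{t_0}}\|u\|_{L^{qt_0'}(\bG)}^q$, which opens a genuine gap between the input exponent $qt_0'$ and the output exponent $qr$. The resulting iteration step
\[
\|u\|_{L^{qr}(\bG)}\ \leq\ \bigl(C_Q q\,\|V\|_{L^{t_0}(\bG)}\bigr)^{1/q}\,\|u\|_{L^{qt_0'}(\bG)}
\]
can be chained through $q_{k+1}t_0' = q_k r$, i.e., $q_k=(r/t_0')^k q_0$, growing geometrically. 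Since $\sum 1/q_k<\infty$ the product of the constants converges, and standard Moser bookkeeping yields $\|u\|_{L^\infty(\bG)}\leq C_Q\|V\|_{L^{t_0}(\bG)}^{t_0'r/(r-t_0')}\|u\|_{L^{2^*(s)t_0'}(\bG)}$, the $\|V\|$-exponent being the weighted sum of the telescoping geometric series.

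Part (b), extending the $L^q$ integrability down to every $q>r$, is the delicate point, since \eqref{e:fracsobolev} only places $u$ in $L^{2r}$ and the iteration above cannot lower the exponent. \emph{The main obstacle} is the borderline character of $V\in L^{r'}$: no Hölder gap is available, and the one-step gain of (a) cannot be run in reverse. My approach would be to revisit the master inequality at the level of the super-level sets of $u$ by testing with $\phi=(u-k)_+^\beta$ for $k>0$ and $\beta$ chosen in $(0,1)$; the fact that $|\{u>k\}|\leq k^{-2^*(s)}\|u\|_{L^{2^*(s)}}^{2^*(s)}$ is finite and shrinks in $k$ confines the argument to sets of finite measure where the low-exponent integrability is automatic, and produces decay of $k\mapsto|\{u>k\}|$ which, by the layer-cake formula, translates into $u\in L^q(\bG)$ for every $q>r$. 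The bookkeeping requires the truncation level $M$ in \eqref{e:basic Lebesgue bound} to be selected quantitatively in terms of both $q$ and $k$ so that the absorption at each step remains uniform, and ensuring this is where the fine control of $V\in L^{r'}$ via the integrability of its large-value part is indispensable.
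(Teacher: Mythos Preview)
Your treatment of parts (a) and (c) via the Moser iteration built on the elementary inequality $(a-b)(a^\beta-b^\beta)\geq \tfrac{4\beta}{(\beta+1)^2}(a^{(\beta+1)/2}-b^{(\beta+1)/2})^2$ is correct and is essentially the argument the paper invokes by reference.

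The gap is in part~(b). The test function $\phi=(u-k)_+^\beta$ vanishes on $\{u\le k\}$ and therefore yields information only about the \emph{large} values of $u$. But lowering the Lebesgue exponent from $2^*(s)$ down to any $q>r$ on the unbounded space $\bG$ is a question about \emph{small} values: since $u\in L^{2^*(s)}(\bG)$ already controls $\int_{\{u\ge 1\}}u^q$, what remains is $\int_{\{0<u<1\}}u^q$, and super-level-set information cannot bound this. In terms of the layer-cake formula $\int u^q = q\int_0^\infty k^{q-1}|\{u>k\}|\,dk$, convergence near $k=\infty$ follows from the known $|\{u>k\}|\le Ck^{-2^*(s)}$, but convergence near $k=0^+$ requires growth control of $|\{u>k\}|$ as $k\downarrow 0$, and testing with $(u-k)_+^\beta$ says nothing about that regime. (There is also a technical point: for $\beta<1$ the map $t\mapsto(t-k)_+^\beta$ is not Lipschitz near $t=k$, so the upper truncation $\min(u,L)$ you mention is not enough to legitimise the test function.)

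The paper's argument for (b) works around this by a different splitting. It tests with $F_\varepsilon(u)$ built from $G_\varepsilon(t)=t(t+\varepsilon)^{(\alpha-1)/2}$, $0<\alpha<1$, a globally Lipschitz mollification of the sub-unit power $t^{(1+\alpha)/2}$. The decisive idea is to split the right-hand side $\int V\,u\,F_\varepsilon(u)$ according to the level sets of the \emph{potential} $V$, not of $u$: on $\{|V|\le\delta\}$ the quantity $\bigl(\int_{\{|V|\le\delta\}}|V|^{r'}\bigr)^{1/r'}$ is small by dominated convergence and that piece is absorbed into the left-hand side, while $\{|V|>\delta\}$ has finite measure by Chebyshev (since $V\in L^{r'}$), so on it H\"older's inequality legitimately compares $\int u^{r(1+\alpha)}$ with $\int u^{2^*(s)}$. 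Letting $\varepsilon\to 0$ gives $u\in L^{r(1+\alpha)}(\bG)$ for every $0<\alpha<1$, which together with (a) covers the full range $q>r$. The moral is that the finite-measure set must come from $V$, not from $u$.
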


\begin{proof}
We begin by recalling a few basic facts which are crucial for working with appropriate test functions in the weak formulation of the nonlocal equation \eqref{e:fracschro1}. First, using H\"older's inequality and the definition of the homogeneous fractional Sobolev space $\DsG$, we can take $\phi\in\DsG$  in the weak formulation \eqref{e:weak formulation}.  On the other hand, for a globally Lipschitz function $F$ defined on $\mathbb{R}$ and a  function $u\in\DsG$ we have from \eqref{e:def frac norm} the inequality
$$
[F\circ u]_{s,2}\leq \norm{F'}_{L^\infty(\mathbb{R})} [u]_{s,2}, $$
hence  $F\circ u\in\DsG$. Assuming, in addition, that $F$ is of the form $F(t)=\int_0^t  G'(\tau)^2 d\tau$, then  from Jensen's inequality we have for any nonnegative numbers $a\le b$ the inequality
\[
\left(\frac{G(b) - G(a)}{b-a}\right)^2 = \left(\frac{1}{b-a}\int_a^b G'(\tau)d\tau\right)^2\le \frac{1}{b-a} \int_a^b G'(\tau)^2 d\tau = \frac{F(b)-F(a)}{b-a},
\]
which gives
\begin{equation}\label{FaFb}
(b-a)\left(F(b)-F(a) \right)\geq \left(G(b)-G(a) \right)^2.
\end{equation}
Applying the Sobolev inequality \eqref{e:fracsobolev} to the function $G\circ u$, and using \eqref{FaFb}, we find
\begin{multline}\label{e:basic test fn1}
\norm{G\circ u}_{L^{2^*(s)}(\bG)}^{2}\leq S^2
\int_{\bG}\int_{\bG} \frac {|G\circ u(g)-G\circ u(h)|^2}{|
h^{-1}\cdot g|^{Q+2s}}\, {dg}dh
\\
\leq S^2 \int_{\bG}\int_{\bG} \frac {(u(g)-u(h))(F\circ u(g)-F\circ u(h))}{|g^{-1}\cdot h|^{Q+2s}}\, {dg}dh
\le S^2 \int_{\bG}V(g) u(g) (F\circ u)(g) {dg},
\end{multline}
where in the last inequality we have used \eqref{e:weak formulation} with the choice $\phi = F\circ u$ as a test function.

For the proof of parts {(a) and (c)}  see for example \cite[Lemma
10.2]{GV00} and \cite[Theorem 4.1]{V06}, but one has to use the fractional Sobolev inequality \eqref{e:fracsobolev} rather than
the Folland-Stein inequality. We give the proof of part {(b)} below taking into account also \cite[Proposition 3.3]{BMS16} which dealt with the Euler-Lagrange equation of the fractional $p$-Laplacian in the Euclidean setting.

To prove (b) in Proposition \ref{p:Lpregul} we will show that for any $0<\alpha<1$ we have that $u\in L^{r(1+\alpha)}(\bG)$. From part (a) and the fact that $2^*(s)/2 < r(1+\alpha)<2r=2^*(s)$ the claim of part (b) will be proven. The details are as follows.
For  $\ve>0$ and $0<\alpha<1$,  consider the functions
\[
F_\ve(t)=\int_0^t G'_\ve(\tau)^{2}d\tau,\ \ \ \ \text{where}\ \ G_\ve(t)=t(t+\ve)^{(\alpha-1)/2}.
\]
Notice that $F_\ve$ is nondecreasing by definition. A simple calculation shows that
\begin{equation}\label{simple}
0 \le G'_\ve(t) = \frac {1}{(t + \eps)^{(3-\alpha)/2}}\left [\frac {1+\alpha}{2} t + \eps \right] \leq \frac {1}{(t + \eps)^{(1-\alpha)/2}}\leq \frac {1}{ \eps^{(1-\alpha)/2}},
\end{equation}
where we have used that $\alpha<1$.
This shows in particular that $F'_\eps(t) = G'_\ve(t)^2 \le \ve^{\alpha-1}$, therefore $F_\eps$ is a globally Lipschitz function. We thus find from \eqref{e:basic test fn1}
\begin{equation}\label{Ge}
\norm{G_\ve\circ u}_{L^{2^*(s)}(\bG)}^{2}\leq S^2\int_{\bG}|V| uF_\eps(u)dg.
\end{equation}
In order to estimate the right-hand side in \eqref{Ge} we will use the following inequalities, which are valid for $u\ge 0$,
\begin{equation}\label{e:prop of f and G}
F_\eps(u)\leq \frac {1}{\alpha}u^\alpha\qquad \text{and} \qquad uF_\eps(u)\leq \frac {1}{\alpha} G_\ve(u)^2.
\end{equation}
The former is easily proved by noting that
\[
F_\eps(u)\leq \int_0^u \frac {dt}{(t + \eps)^{1-\alpha}} \le \frac{(u+\ve)^{-\alpha}}{\alpha} \leq \frac {1}{\alpha}u^\alpha.
\]
This estimate trivially gives
$u F_\ve(u)  \le u \frac{(u+\ve)^\alpha - \ve^\alpha}{\alpha}$, and therefore from the definition of $G_\ve$ we see that the latter inequality in \eqref{e:prop of f and G} does hold provided that
\[
(u+\ve)^\alpha - \ve^\alpha \le \frac{u}{u+\ve}(u+\ve)^{\alpha},\quad \text{i.e.,}\quad  (u+\ve)^{\alpha-1} - \ve^{\alpha-1} \leq 0.
\]
The latter inequality follows from the trivial inequality
\[
(1+x)^{1-\alpha}\geq 1
\]
valid for $x\geq 0$ and $0<\alpha<1$.

Keeping in mind the definition \eqref{e:r def} of the exponents $r$ and $r'$, using now in \eqref{Ge} the first inequality in \eqref{e:prop of f and G} and H\"older inequality, we easily obtain for a fixed $\delta>0$
\begin{multline}\label{e:RHS1}
\norm{G_\ve\circ u}_{L^{2^*(s)}(\bG)}^{2}\leq S^2\left( \frac {1}{\alpha}\int_{\{|V|>\delta\}}|V|u^{1+\alpha}dg + \int_{\{|V|\leq \delta\}}|V|u F_\eps(u)dg\right)\\
\leq S^2 \left[ \frac {1}{\alpha} \left(\int_{\{|V|>\delta\}}|V|^{r'}dg\right)^{1/r'} \left(\int_{\{|V|>\delta\}}u^{r(1+\alpha)}dg \right)^{1/r} \right] \\
+ S^2\left[ \left(\int_{\{|V|\leq \delta\}}|V|^{r'}dg\right)^{1/r'} \left(\int_{\{|V|\leq \delta\}}(uF_\eps(u))^{r}dg \right)^{1/r}\right].
\end{multline}
Next, we use the second of the inequalities \eqref{e:prop of f and G} to obtain the estimate
\begin{equation}\label{e:RHS2}
 \left(\int_{\{|V|\leq \delta\}}(uF_\eps(u))^{r}dg \right)^{1/r}\leq \frac {1}{\alpha}\left(\int_{\{|V|\leq \delta\}}\left(G_\eps(u)\right)^{2r} \right)^{1/r}\leq \frac {1}{\alpha}\norm{G_\ve\circ u}_{L^{2^*(s)}(\bG)}^{2}.
\end{equation}
By Lebesgue dominated convergence one has $\int_{\{|V|\leq \delta\}}|V|^{r'}dg\longrightarrow 0$ as $\delta\to 0^+$. Therefore, we can choose $\delta>0$ so small that
\begin{equation}\label{delta}
\frac {S^2}{\alpha}\left(\int_{\{|V|\leq \delta\}}|V|^{r'}dg\right)^{1/r'} <\frac 12.
\end{equation}
Combining \eqref{delta} with \eqref{e:RHS2} we can absorb in the left-hand side the second term in the right-hand side of \eqref{e:RHS1}, obtaining the inequality
\begin{equation}\label{e:RHS3}
\norm{G_\ve\circ u}_{L^{2^*(s)}(\bG)}^{2}\leq \frac {2S^2}{\alpha}\left(\int_{\{|V|>\delta\}}|V|^{r'}dg\right)^{1/r'} \left(\int_{\{|V|>\delta\}}u^{r(1+\alpha)}dg \right)^{1/r}.
\end{equation}
Notice that the hypothesis $V\in L^{r'}(\bG)$ and Chebyshev inequality imply that the distribution function of $V$ satisfies for every $\delta>0$
\begin{equation}\label{cheb}
\mu(\delta)=\left\vert\{g\in \bG\mid |V(g)|>\delta \}\right\vert \leq \frac {1}{\delta^{r'}} \int_{\{|V|>\delta\}} |V|^{r'} dg < \infty.
\end{equation}
Since $r(1+\alpha)<2r=2^*(s)$, H\"older inequality thus gives
\[
\left(\fint_{\{|V|>\delta\}} u^{r(1+\alpha)}dg\right)^{\frac {1}{r(1+\alpha)}}\leq \left(\fint_{\{|V|>\delta\}} u^{2^*(s)}dg\right)^{\frac {1}{2^*(s)}},
\]
or equivalently, recalling that $2r=2^*(s)$,
\begin{multline}\label{aftercheb}
\left(\int_{\{|V|>\delta\}} u^{r(1+\alpha)}dg\right)^{\frac {1}{r}}  \leq \mu(\delta)^{\frac{1}{r}-\frac {1+\alpha}{2r}} \left(\int_{\{|V|>\delta\}} u^{2^*(s)}dg\right)^{\frac {1+\alpha}{2^*(s)}}\\
=\mu(\delta)^{\frac {1-\alpha}{2r}} \left(\int_{\{|V|>\delta\}} u^{2^*(s)}dg\right)^{\frac {1+\alpha}{2^*(s)}}.
\end{multline}
Using \eqref{cheb}, \eqref{aftercheb} and $r'/r=1/(1-r)$ in \eqref{e:RHS3} we come to
\[
\norm{G_\ve\circ u}_{L^{2^*(s)}(\bG)}^{2}\leq \frac {2S^2}{\alpha} \frac {\norm{V}_{L^{r'(\bG)}}^{1+\frac{1-\alpha}{2(1-r)}}}{\delta^{\frac{1-\alpha}{2(1-r)}}} \norm{u}^{1+\alpha}_{L^{2^*(s)}(\bG)}.
\]
Letting $\eps$ go to 0, noting that $\lim_{\eps\rightarrow 0}G_\eps(u)=u^{(1+\alpha)/2}$ brings us to
\[
\left( \int_{\bG}u^{(1+\alpha)r}\right)^{1/r}<\infty,
\]
which, taking into account also part (a), completes the proof of part (b) of Proposition \ref{p:Lpregul}.

\end{proof}

\section{Proof of Theorem \ref{t:loc bound}}\label{s:proof of loc bound}

The proof consists of several steps detailed in the following sub-sections.

\subsection{The localized fractional Sobolev inequality}
The proof of Theorem \ref{t:loc bound} will use the following
version of a localized fractional Sobolev inequality. For an open
set $\Omega \subset \mathbb{G}$ we denote by
$\mathcal{D}^{s,2}(\Omega)$ the completion of $C_0^\infty(\Omega)$
with respect to the norm
\begin{equation*}
\norm{v}_{\mathcal{D}^{s,2}(\Omega)}=\left(\int_{\mathbf{G}}\int_{\mathbf{G}}
\frac{|\tilde v(g)-\tilde v(h)|^2}{|g^{-1}\cdot h|^{Q+2s}}\right)^{1/2},
\end{equation*}
where $\tilde v$ denotes the extension of $v$ to a function on $\bG$, which is equal to zero outside of $\Omega$.
\begin{lemma}\label{l:localized frac sobolev}
Let $0<s<1$ and $2s<Q$. There exists a constant $C=C(Q,s)>0$ such
that, for any ball $ B_R$ of radius  $R$, $r<R$, and  $v\in
\mathcal{D}^{s,2}(B_R)$  with $\mathrm{supp}\, v\subset B_{r }$ we
have
\begin{equation}\label{e:loc Sobolev}
\left[\int_{B_R}|v|^{2^*(s)} \vol\right]^{2/2^*(s)}\leq C\left [
\int_{B_R}\int_{B_R} \frac{| v(g)-v(h)|^2}{|g^{-1}\cdot h|^{Q+2s}}dgdh\ +\ \frac {1}{R^{2s}}\left(\frac
{R}{R-r}\right)^{Q+2s}\int_{B_R}|v|^2 \vol\right],
\end{equation}
\end{lemma}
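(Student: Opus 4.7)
\textbf{Proof proposal for Lemma \ref{l:localized frac sobolev}.}

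The plan is to reduce the statement to the global fractional Sobolev inequality \eqref{e:fracsobolev} and then control the ``missing'' piece of the double integral using the separation between the support of $v$ and the complement of $B_R$. First, I would extend $v$ by zero outside $B_R$; by the definition of $\mathcal{D}^{s,2}(B_R)$ this extension $\tilde v$ lies in $\mathcal{D}^{s,2}(\bG)$, and since $\mathrm{supp}\,v\subset B_r\subset B_R$ one has $\int_{B_R}|v|^{2^*(s)}dg=\int_{\bG}|\tilde v|^{2^*(s)}dg$. Applying \eqref{e:fracsobolev} to $\tilde v$ and squaring yields
\[
\Big(\int_{B_R}|v|^{2^*(s)}dg\Big)^{2/2^*(s)} \le S^{2}\int_{\bG}\int_{\bG}\frac{|\tilde v(g)-\tilde v(h)|^{2}}{|g^{-1}\cdot h|^{Q+2s}}\,dg\,dh.
\]

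Next I would split $\bG\times\bG$ into the four pieces $B_R\times B_R$, $B_R\times B_R^{c}$, $B_R^{c}\times B_R$, and $B_R^{c}\times B_R^{c}$. The last piece contributes $0$ because $\tilde v\equiv 0$ on $B_R^c$. The two mixed pieces are symmetric, and on each of them one of the two values of $\tilde v$ vanishes, so $|\tilde v(g)-\tilde v(h)|^{2}$ reduces to $|v(h)|^{2}$ (respectively $|v(g)|^{2}$), and the nonzero value forces the non-complement variable to lie in $B_r$. After this reduction the key quantity to estimate is, for $h\in B_r$,
\[
I(h)\;\overset{\text{def}}{=}\;\int_{B_R^{c}}\frac{dg}{|g^{-1}\cdot h|^{Q+2s}}.
\]

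The main (and really the only) analytic step is bounding $I(h)$. Using the bi-invariance of the Haar measure noted in Section \ref{S:hom}, change variables $g=h\cdot w$, so that $g^{-1}\cdot h=w^{-1}$ and $|g^{-1}\cdot h|=|w|$ by \eqref{e:homog norm}(i). For $h\in B_r$ the triangle inequality \eqref{e:triangle ineqs} gives $|h\cdot w|\le r+|w|$, so that $g\in B_R^{c}$ forces $|w|>R-r$. Therefore, by the polar integration formula \eqref{e:polar coord},
\[
I(h)\;\le\;\int_{|w|>R-r}\frac{dw}{|w|^{Q+2s}}\;=\;\frac{\sigma_Q}{2s}\,\frac{1}{(R-r)^{2s}}.
\]
Plugging this into the split double integral yields
\[
\Big(\int_{B_R}|v|^{2^*(s)}dg\Big)^{2/2^*(s)}\ \le\ S^{2}\!\int_{B_R}\!\int_{B_R}\!\frac{|v(g)-v(h)|^{2}}{|g^{-1}\cdot h|^{Q+2s}}\,dg\,dh\ +\ \frac{C(Q,s)}{(R-r)^{2s}}\int_{B_R}|v|^{2}\,dh.
\]

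Finally, to match the exact form of \eqref{e:loc Sobolev} I would use the trivial majorization
\[
\frac{1}{(R-r)^{2s}}\;=\;\frac{1}{R^{2s}}\Big(\frac{R}{R-r}\Big)^{2s}\;\le\;\frac{1}{R^{2s}}\Big(\frac{R}{R-r}\Big)^{Q+2s},
\]
which is valid since $R/(R-r)\ge 1$. I do not anticipate any genuine obstacle; the only mildly delicate point is the change of variables for $I(h)$, which relies on bi-invariance of Haar measure and the homogeneous-norm triangle inequality \eqref{e:triangle ineqs}, both already established earlier in the paper.
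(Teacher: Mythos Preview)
Your argument is correct and follows the same overall strategy as the paper: extend $v$ by zero, apply the global fractional Sobolev inequality \eqref{e:fracsobolev} to $\tilde v$, and control the cross terms $\int_{B_R^c}|g^{-1}\cdot h|^{-(Q+2s)}\,dg$ for $h\in B_r$ using the separation between $\operatorname{supp}v$ and $B_R^c$. The only difference lies in how this last integral is handled. The paper first replaces the integrand by $\sup_{g\in B_r}|g^{-1}\cdot h|^{-(Q+2s)}\le (|h|-r)^{-(Q+2s)}$, then splits $\bG\setminus B_R$ into the annulus $B_{2R}\setminus B_R$ and the exterior $\bG\setminus B_{2R}$, and integrates in polar coordinates to reach directly the factor $R^{-2s}\big(R/(R-r)\big)^{Q+2s}$. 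Your left-translation $g=h\cdot w$ is more economical: it collapses the tail to the single radial integral $\int_{|w|>R-r}|w|^{-(Q+2s)}\,dw=\sigma_Q/(2s)\,(R-r)^{-2s}$, which is in fact a sharper intermediate bound than the paper's; you then majorize $(R-r)^{-2s}\le R^{-2s}\big(R/(R-r)\big)^{Q+2s}$ to match the stated form. Both routes rest on the triangle inequality \eqref{e:triangle ineqs}; yours additionally uses the (bi-)invariance of Haar measure, which is indeed available in this setting.
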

\begin{proof}
The proof is essentially contained in the Euclidean version  \cite[Proposition 2.3]{BP15}.
We will use the trivial extension  and then apply the fractional Sobolev inequality \eqref{e:fracsobolev}. Since $v$ has compact support in $B_R$ its extension  by zero  on the complement of the ball is a function $\tilde v\in \mathcal{D}^{s,2}(\bG)$. Furthermore, due to the assumption on the support of $v$, we have
\begin{multline}\label{e:local sobolev ineq proof}
\norm{\tilde v}^2_{\mathcal{D}^{s,2}(\bG)}\leq \int_{B_R}\int_{B_R} \frac{|v(g)-v(h)|^2}{|g^{-1}\cdot h|^{Q+2s}}dgdh + 2 \int_{B_{r }}|v(g)|^2\int_{\bG\setminus B_R} \frac {1}{|g^{-1}\cdot h|^{Q+2s}} dh\, dg\\
\leq \int_{B_R}\int_{B_R} \frac{|v(g)-v(h)|^2}{|g^{-1}\cdot h|^{Q+2s}}dgdh + 2 \int_{\bG\setminus B_R} \sup_{g\in B_{r}}\frac {1}{|g^{-1}\cdot h|^{Q+2s}} dh\, \int_{B_R}|v(g)|^2 dg\\
\leq \int_{B_R}\int_{B_R} \frac{|v(g)-v(h)|^2}{|g^{-1}\cdot
h|^{Q+2s}}dgdh +  C\left(\frac {R}{R-r}\right)^{Q+2s}\frac
{1}{R^{2s}}\, \int_{B_R}|v(g)|^2 dg.
\end{multline}
In order to see the last of the above inequalities we used polar coordinates as in the identity \eqref{e:polar coord} to obtain the following inequalities, where $\sigma_Q$ is the area of the unit sphere,
\begin{multline*}
\int_{\bG\setminus B_R} \sup_{g\in B_{r}}\frac {1}{|g^{-1}\cdot h|^{Q+2s}} dh \leq \int_{B_{2R}\setminus B_R} \frac {1}{\left(|h|-r\right)^{Q+2s}} dh + \int_{\bG\setminus B_{2R}} \frac {1}{|g^{-1}\cdot h|^{Q+2s}} dh\\
\leq \sigma_Q\left[ \int_R^{2R} \frac {t^{Q-1}}{\left(t-r\right)^{Q+2s}} dt + \int_{2R}^\infty \left(\frac {2}{t}\right)^{Q+2s} t^{Q-1}dt\right]
\end{multline*}
since $|g^{-1}\cdot h|\geq |h|-r\geq |h|/2$ when $r<R$, $|g|<r$ and $|h|\geq 2R$. Therefore we have
\[
\int_{\bG\setminus B_R} \sup_{g\in B_{r}}\frac {1}{|g^{-1}\cdot h|^{Q+2s}} dh\leq C\left[ \frac {R^Q}{Q+2s} \left(\frac {1}{R-r} \right)^{Q+2s}+ \frac {1}{2s}\frac {2^{Q+2s}}{R^{2s}}\right],
\]
which gives  \eqref{e:local sobolev ineq proof}. The latter implies trivially  \eqref{e:loc Sobolev}.
\end{proof}

\subsection{Caccioppoli inequality}
We  begin by stating the adaptation to our setting of the Caccioppoli inequality  for the fractional $p-$Laplacian in Euclidean space \cite{BP15}.  For $\beta\geq 1$ and $\delta>0$ define the following functions for $t\geq 0$,
\begin{equation}\label{e:g fns}
\phi(t)=(t+\delta)^\beta, \qquad \Phi(t)=\int_0^t|\phi'(\tau)|^{1/2} \, d\tau =2\frac{\beta^{1/2}}{\beta+1}(t+\delta)^{(\beta+1)/2}.
\end{equation}
For our goals, the precise value of $\delta$  is given in  \eqref{e:set delta} below.
Suppose $\Omega'\Subset \bG$ and  $\psi\in\COG$ is a positive function with supp$\, \psi\subset\Omega'$. Let $u$ be a weak nonnegative subsolution to the equation $\frlap u =F$ with $F\in L^{(2^*(s))'}$. Then, we have for some constant $C=C(Q)$, which is independent of $\Omega'$, the inequality
\begin{equation}\label{e:rough cacc}
\begin{split}
\int_{\Omega'}\int_{\Omega'} &\frac{\Big|\Phi(u(g))\,\psi(g)-\Phi(u(h))\,\psi(h)\Big|^{2}\,}{|g^{-1}\cdot h|^{Q+2s}}\leq C\int_\bG |F|\,\phi(u(g))\, \psi^2(g)\, dg\\
&+ \frac{C}{\beta}\,\left(\frac{\beta+1}{2}\right)^2\,\int_{\Omega'}\int_{\Omega'} \frac{|\psi(g)-\psi(h)|^2}{|g^{-1}\cdot h|^{Q+2s}}\,\Big((\Phi(u(g))^2+\Phi(u(h))^2\Big)\, dg\,dh\\
&+C\,\left(\sup_{h\in \mathrm{supp}\,\psi} \int_{\bG\setminus \Omega'} \frac{|u(g)|}{|g^{-1}\cdot h|^{Q+2s}}\,dg\right)\, \int_{\Omega'} \phi(u)\, \psi^2\, dg.
\end{split}
\end{equation}
The proof of \eqref{e:rough cacc} follows by adapting to our setting, for the case $p=2$, that of the  localised Caccioppoli inequality in $\Rn$ in \cite[Proposition 3.5]{BP15}.

We now fix $g_0\in\bG$, and for $0<r<R$, we take a  nonnegative smooth bump function $\psi\in\COG$ such that
\begin{equation}\label{e:psi choice}
\psi\vert_{B_r(g_0)}\equiv 1,\qquad \mathrm{supp}\, \psi\Subset
B_{\frac {r+R}{2}}(g_0), \qquad
|\psi(g_1)-\psi(g_2)|\leq \frac{C}{R-r} |g_1^{-1}\cdot g_2| .
\end{equation}
In order to achieve  \eqref{e:psi choice} we take a cut-off function $\psi(g)=\eta (|g_0^{-1}\cdot g|)$, where $\eta$ is a smooth bump function on the real line, such that, $\eta(t)\equiv 1 $ on $|t|\leq r $, $\eta\equiv 0$ on $t\geq (R+r)/2$ and for some constant $K>0$ we have $|\eta'(t)|\leq K/(R-r)$ for all $t$. Hence, for any $\rho_1,\, \rho_2\in \mathbb{R}$ we have
$$|\eta(\rho_1)-\eta(\rho_2)|\leq  \frac {K}{R-r}|\rho_1-\rho_2|.$$
On the other hand, if we let $\rho(g)=| g^{-1}\cdot g_o|$, then from the triangle inequality \eqref{e:triangle ineqs} it follows that $\rho$ is a Lipschitz continuous function  with respect to the gauge distance, with Lipschitz constant equal to 1,
\[
|\rho(g_1)-\rho(g_2)|\leq |g_1^{-1}\cdot g_2|, \qquad g_1, g_2\in \bG
\]
Therefore,  for  $g_j\in \bG$ and $\rho_j=\rho(g_j)$, $j=1,2$, we have
\[
|\psi(g_1)-\psi(g_2)|=|\eta(\rho_1)-\eta(\rho_2)|\leq \frac {K}{R-r} |\rho_1-\rho_2|\leq  \frac {K}{R-r}|g_1^{-1}\cdot g_2|.
\]
For the remainder of the proof, for any $r>0$  we will denote by $B_r$ the ball $B_r(g_0)$ with the understanding that the center is the fixed point $g_0$.

If $u$ is a nonnegative  weak subsolution to
\[
\frlap u=Vu,
\]
then, with the above choice of $\psi$ and $F=Vu$, \eqref{e:rough cacc}
implies the following inequality
\begin{multline}\label{e:cacciopolli1}
\int_{B_R}\int_{B_R}\frac
{|u_\delta(g)^{(\beta+1)/2}\psi(g)-u_\delta(h)^{(\beta+1)/2}\psi(h)|^2}{|g^{-1}\cdot
h|^{Q+2s}} \, dg\, dh \\
\leq C \beta \left[
\int_{B_R}\psi^2 V u_\delta^{\beta+1}\vol\ + \ \left(\frac {R}{R-r} \right)^{2} \frac {1}{R^{2s}} \int_{B_R}u_\delta^{\beta+1}\vol 
+  \left(\frac {R}{R-r}\right)^{Q+2s}  \frac {1}{R^{2s}}
T(u;g_0,R) \int_{B_R}u_\delta^{\beta} \vol\right],
\end{multline}
where $u_\delta=u+\delta$ and $T(u;g_0,R)$ is the tail \eqref{e:tail def}. The proof of \eqref{e:cacciopolli1} is contained in \cite[Theorem 3.8 and (3.29)]{BP15}, except we have to use the Lipschitz bound in \eqref{e:psi choice} for the term $|\psi(g)-\psi(h)|^2$ in \eqref{e:rough cacc}.

Next,  we apply to the inequality \eqref{e:cacciopolli1}  the localized Sobolev
inequality \eqref{e:loc Sobolev}, with $r$ replaced with $(R+r)/2$
and $v$ with $\psi \,u_\delta^{(\beta+1)/2}$, taking into account

\[
 \left(\frac {R}{R-r} \right)^2 \leq \left(\frac {R}{R-r}\right)^{Q+2s}
\]
and also that by the choice of $\psi$ we have  $\mathrm{supp}\,
(\psi\,u_\delta^{(\beta+1)/2})\Subset B_{(R+r)/2}\Subset B_R$  . As
a result, we obtain
\begin{multline}\label{e:cacciopolli2}
\left[ \int_{B_{\frac {R+r}{2}}}\psi^{2^*(s)}u_\delta^{(\beta+1)2^*(s)/2 } \vol\right]^{2/2^*(s)}\leq
C\beta \left[\int_{B_R}\psi^2Vu_\delta^{\beta+1}\vol\right.\\
\left. + \left(\frac {R}{R-r}\right)^{Q+2s}\frac {{1}}{R^{2s}}
\int_{B_R}u_\delta^{\beta+1}\vol
 + \left(\frac {R}{R-r}\right)^{Q+2s}\frac {1}{R^{2s}}T(u;g_0,R) \int_{B_R}u_\delta^{\beta}\vol\right].
\end{multline}

\subsection{Use the assumptions on $V$}
This is the core of the new argument leading to our result.
By H\"older's inequality and $\mathrm{supp} \,\psi\Subset B_{(R+r)/2}$, we have
\[
\int_{B_R}\psi^2Vu_\delta^{\beta+1}\vol\ \leq \ \left( \int_{B_R}V^{t_0}\vol\right)^{1/t_0} \left(  \int_{B_{\frac {R+r}{2}}} \left(\psi\, u_\delta^{\frac {\beta+1}{2}}\right)^{2^*(s)}\vol\right)^{1/t} \left( \int_{B_{\frac {R+r}{2}}} \left(\psi\, u_\delta^{\frac {\beta+1}{2}} \right)^2 \vol \right)^{1/\kappa},
\]
where
\[
t=\frac {2st_0}{Q-2s}\qquad \text{and} \qquad \kappa=\frac {2st_0}{2st_0-Q}
\]
so that
\[
\frac {1}{t_0}+\frac 1t+\frac 1{\kappa}=1\qquad \text{and}\qquad \frac {2^*(s)/2}{t}+\frac {1}{\kappa}=1,
\]
which is possible due to the assumptions in Theorem \ref{t:loc bound}. Next, we use Young's inequality $ab\leq \varepsilon \frac {a^{\kappa'}}{\kappa'}+\frac {1}{\varepsilon^{\kappa-1}}\frac {b^\kappa}\kappa$ in the right-hand side of the above inequality to conclude
\[
\int\psi^2Vu_\delta^{\beta+1} \vol \leq \frac {\varepsilon}{\kappa'}\left(  \int_{B_{\frac {R+r}{2}}} \left(\psi\, u_\delta^{\frac {\beta+1}{2}}\right)^{2^*(s)}\vol \right)^{\kappa'/t}\ + \ \frac {1}{\varepsilon^{\kappa-1}\kappa}\left( \int_{B_R}V^{t_0}\vol\right)^{\kappa/t_0}\left( \int_{B_{\frac {R+r}{2}}} \psi^2\, u_\delta^{\beta+1}\vol \right).
\]
Hence, taking into account $\kappa'/t=2/2^*(s)$, $\kappa/t_0=\frac {2s}{2st_0-Q} $ and the above inequality together with the properties of $\psi$, we obtain from  \eqref{e:cacciopolli2} the following inequality
\begin{multline*}
\left[ \int_{B_{\frac {R+r}{2}}} \psi^{2^*(s)}u_\delta^{(\beta+1)\frac {2^*(s)}{2} } \vol\right]^{\frac {2}{2^*(s)}} \leq  \beta \left \{\frac {C\varepsilon}{\kappa'}\left[ \int_{B_{\frac {R+r}{2}}}\psi^{2^*(s)} u_\delta^{(\beta+1)\frac {2^*(s)}{2} }\vol \right]^{\frac {2}{2^*(s)}} \right.\\
 +\frac {C}{\varepsilon^{\kappa-1}\kappa}\left( \int_{B_R}V^{t_0}\vol\right)^{\frac {2s}{2st_0-Q}}\left( \int_{B_{R}} u_\delta^{\beta+1} \vol\right)\\
\left.+C\left(\frac {R}{R-r}\right)^{Q+2s}\left[ \frac {1}{R^{2s}}
\int_{B_R}u_\delta^{\beta+1}\vol
 + \frac {1}{R^{2s}}T(u;g_0,R)
 \int_{B_R}u_\delta^{\beta}\vol\right] \right \} .
\end{multline*}
Choosing $\varepsilon$ such that $\frac {C\varepsilon\beta}{\kappa'}=\frac
12$, we absorb the first term in the right-hand side in the
left-hand side, and then reduce the domain of integration, taking
into account that $\psi\equiv 1$ on $B_r$, which brings us  to the
following inequality
\begin{multline*}
\left[ \int_{B_{r}}u_\delta^{(\beta+1)\frac {2^*(s)}{2} }\vol
\right]^{\frac {2}{2^*(s)}}\leq C\left[\beta^{\kappa} \left
(\int_{B_R}V^{t_0}\vol\right)^{\frac{2s}{2st_0-Q}} +\left(\frac
{R}{R-r}\right)^{Q+2s}\frac {{\beta}}{R^{2s}} \right]
\int_{B_R}u_\delta^{\beta+1}\vol
\\
 + C\left(\frac {R}{R-r}\right)^{Q+2s}\frac {\beta}{R^{2s}}T(u;g_0,R) \int_{B_R}u_\delta^{\beta}\vol.
\end{multline*}
Since  $u_\delta^\beta\leq u_\delta^{\beta+1}/\delta$, the above
inequality allows us to conclude
\begin{multline*}
\left[ \int_{B_{r}}u_\delta^{(\beta+1)\frac {2^*(s)}{2} }\vol
\right]^{\frac {2}{2^*(s)}}\leq C\beta^{\kappa}
\left[\left(\int_{B_R}V^{t_0}\vol\right)^{\frac{2s}{2st_0-Q}}
+\left(\frac {R}{R-r}\right)^{Q+2s}\frac {1}{ R^{2s}} \right.
\\
\left. + \left(\frac {R}{R-r}\right)^{Q+2s}\frac {1}{\delta
R^{2s}}T(u;g_0,R) \right] \int_{B_R}u_\delta^{\beta+1}\vol.
\end{multline*}
We recall that in the latter inequality we have radii $0<r<R$ and all balls are centered at the fixed point $g_0$.  Suppose, in addition, that $ 2R_0= {|g_0|}$  and $0<R\leq R_0$. Then, we have   $$B_R=B(g_0,R)\subset B(g_0,R_0)\subset \bG\setminus B(0,R_0),
$$
taking into account the triangle inequality \eqref{e:triangle ineqs}.
Therefore, for $R_0\geq 2\bar R_0$  the decay assumption of $V$, cf. Theorem \ref{t:loc bound}\ref{a:decay of V}), and the above inclusions imply that  for some  constant $C= C(Q,s, K_0)$ we have the bound
\[
\left(\int_{B_R}V^{t_0}\vol\right)^{2s/(2st_0-Q)}\leq \left(\int_{B_{R_0}}V^{t_0}\vol\right)^{2s/(2st_0-Q)}\leq \left(\int_{\{
|g|>R_0\} }V^{t_0}\vol\right)^{2s/(2st_0-Q)} \leq \frac {C}{R_0^{2s}}\leq \frac {C}{R^{2s}}
\]
after using $0<R\leq R_0$ for the last inequality.
Therefore, also observing that $R/(R-r)>1$, we have proven that there exists a constant $C=
C(Q,s, K_0)$, such that for any $\beta\geq 1$,  $g_0$ such that  $ R_0=\frac {|g_0|}{2}\geq \bar R_0$,  and radii $0<r<R\leq R_0$ we have
\begin{equation}\label{e:ineq to start Moser}
\left[ \int_{B_{r}}u_\delta^{(\beta+1)\frac {2^*(s)}{2} }\vol
\right]^{\frac {2}{2^*(s)}}\leq \frac {C\beta^{\kappa}}{R^{2s}}
\left(\frac {R}{R-r}\right)^{Q+2s}\left[1 +\frac
{T(u;g_0,R)}{\delta } \right] \int_{B_R}u_\delta^{\beta+1}\vol.
\end{equation}

\subsection{Moser's iteration}  

By  Proposition \ref{p:Lpregul} c) we have that $u\in L^{q}(\bG)\cap L^\infty(\bG)$ for any $q\geq 2^*(s)/2$, hence $u\in L^{q_0}_{loc}(\bG)$ for  $q_0\geq 2$. In fact, for the proof of the theorem we can assume $q_0=2$, but the argument is valid for any $q_0\geq 2$. We also let $\beta=q_0 -1$.

Recalling that the exponent  $r=2^*(s)/2=Q/(Q-2s)>1$, see \eqref{e:r def}, we define the sequence
\[
 q_{j+1}=r q_j>q_j, \quad j=0,1,2,\dots.
\]
From \eqref{e:ineq to start Moser} we have with $B_{r_{j}}=B(g,r_j)$, $r_j=\frac {R}{2}(1+2^{-j})$, $j=0,1,2,\dots$ the inequality
\begin{multline*}
\left(
\int_{B_{r_{j+1}}}u_\delta^{q_{j+1}}\vol\right)^{1/q_{j+1}}\ \leq
\ \frac {C^{ 1/{q_{j}}}q_j^{\kappa/{q_j}}} {r_j^{{2s}/{q_{j}}}}
\left[\left( \frac
{r_j}{r_j-r_{j+1}}\right)^{Q+2s}\right]^{1/q_{j}}\left[1+ \frac
{T(u,g_0,r_j)}{\delta}\right]^{1/q_{j}} \left( \int_{B_{r_{j}}}
u_\delta^{q_{j}}\vol\right)^{1/{q_{j}}}.
\end{multline*}
The definition of the tail \eqref{e:tail def} gives for a fixed  $R\ge \bar R_0$ and $R/2\le r_j<R$ the inequality
$$T(u;g_0,r_j)={r_j}^{2s}\int_{| g^{-1}\cdot h|>r_j}\frac {u(h)}{| g^{-1}\cdot h|^{Q+2s}}dh \leq 2^{2s}T(u;g_0,R/2)$$
while a simple estimate shows
\[
 \left(\frac {r_j}{r_j-r_{j+1}}\right)^{Q+2s} \leq 2^{(j+2)(Q+2s)}.
\]
Hence, letting
\[
M_j=\left( \int_{B_{r_{j+1}}}u_\delta^{q_{j+1}}\vol\right)^{1/q_{j+1}}\qquad\text{and}\qquad T=1+\frac {T(u;g,R/2)}{\delta}
\]
we have with some constants $C_0$ and $C_1$ depending on $Q$ and $s$ the inequality

\[
M_{j+1}\leq \frac {C_0^{
1/q_{j}}C_1^{(j+2)/q_{j}}(q_j)^{\kappa/q_j}}{R^{2s/q_{j}}}\, T^{
1/q_{j}}\, M_j.
\]
Therefore, for
\begin{equation}\label{e:set delta}
\delta=T(u;g,R/2),
\end{equation}
we have $T=2$ and we come to the inequality
\[
M_{j+1}\ \leq \ \frac {C^{ (j+2)/{q_{j}}}(q_j)^{\kappa/q_j}}
{R^{{2s}/{q_{j}}}} M_j.
\]
Therefore, recalling that $q_{j+1}=r^{j} q_0$, we obtain

\[
\sup_{B(g,R/2)} (u+\delta)\ \leq \ C \left(\frac {1}{R^{2s}}
\right)^{\frac {1}{q_0}\sum_{j=0}^\infty r^{-j} }\, C^{\frac
{1}{q_0}\sum_{j=0}^\infty (j+2)r^{-j}}\prod_{j=0}^\infty
(q_j)^{\kappa/q_j} \, M_0.
\]
From the definitions of the exponents $r$ and its H\"older
conjugate $r'=Q/(2s)$ we have  $$\frac {1}{q_0}\sum_{j=0}^\infty
\frac {1}{r^{j}}=\frac {1}{q_0}\frac {r}{r-1}=\frac {r'}{q_0}
=\frac {Q}{2sq_0}$$ and
$$
\prod_{j=0}^\infty (q_j)^{\kappa/q_j}<\infty \quad
\mbox{since}\quad \sum_{i=1}^\infty \frac{\log(q_j)}{q_j}<\infty.
$$
Thus, we come to
\[
\sup_{B(g_0,R/2)} (u+\delta)\ \leq \ C \left (\fint_{B(g_0,R)} (u+\delta)^{q_0}\vol \right)^{1/{q_0}}
.
\]
If we let $q_0=2$  and take into account  the definition of $\delta$ we have shown that there is  a constant  $C_0$, depending on $Q$ and $s$,  such that the inequality
\begin{equation}\label{e:loc bound2}
\sup_{B(g_0,R/2)} u\ \leq \ C_0 \left (\fint_{B(g_0,R)} u^{2} \vol \right)^{1/{2}}\ +\ {C_0}T(u; g_0,R/2).
\end{equation}
holds  for any $g_0\in \bG$ with $|g_0|=2R_0\geq 2\bar R_0$ and $0<R\leq R_0$, where $\bar R_0$ is the radius in the assumptions of Theorem \ref{t:loc bound}.

\subsection{Lowering the exponent}
 To lower the exponent in the average integral in the above inequality we follow the standard argument, see for example \cite[p. 223 Theorem 7.3]{Giusti03}, except we need to account for the tail term similarly to \cite[Corollary 2.1]{KMS15}. In view of the eventual use of the sought estimate in obtaining the asymptotic behaviour of the solution, it is also important to keep the constant in the inequality independent of $R$ as in \eqref{e:loc bound2}.
For any $\rho>0$ let  $$M_\rho\overset{def}{=}\sup_{B(g_0,\rho)}u.$$ First, we will show the
following slight modification of \eqref{e:loc bound2}. There is  a constant  $C_1$, depending  on $Q$ and $s$, such that, for all $g_0\in \bG$ with $|g_0|=2R_0\geq 4\bar R_0$ and $0<r<R\leq R_0$ we have
\begin{equation}\label{e:sup r and R}
M_r\leq C_1 \left(\frac {R}{R-r}\right)^{Q}\left[\left(\fint_{B(g_0,R)}
u^{2} \vol \right)^{1/2} + T(u; g_0, R)\right].
\end{equation}
Letting $\tau=r/R$, $0<\tau<1$, the above inequality is equivalent to showing, with the same constant $C_1$, that we have
\begin{equation}\label{e:lower exp tau}
M_{\tau R}\leq  \frac {C_1}{(1-\tau)^{Q}}\left[\left(\fint_{B(g_0,R)}
u^{2}  \vol\right)^{1/2} \ +\
T(u; g_0, R)\right].
\end{equation}
We turn to the proof of \eqref{e:lower exp tau}.  Let $g_1\in B(g_0,\tau
R)$ and $\rho $ be sufficiently small, in fact,
$$\rho=\frac {(1-\tau)R}{4},$$ so that,
\[
B(g_1,\rho)\subset B(g_1,2\rho)\subset B(g_0, R)\quad\text{ and }\quad \sup_{B(g_0,\tau R)} u = \sup_{B(g_1,\rho)} u.
\]
Notice that by the triangle inequality  we have $|g_1|\geq 2\bar R_0$, which follows from $|g_0|=2R_0\geq 4\bar R_0$, cf. the line above \eqref{e:sup r and R}, hence we can apply \eqref{e:loc bound2} to the ball $B(g_1,\rho)$, which gives
\begin{multline}\label{e:sup est tau 1}
M_{\tau R} = \sup_{B(g_1,\rho)} u\leq \ C_0 \left (\fint_{B(g_1,2\rho)} u^{2} \vol \right)^{1/{2}}\ +\ {C_0}T(u; g_1,\rho)\\
\leq C_02^{-Q/2}\left(\frac {R}{\rho} \right)^{Q/2}\left( \fint_{B(g_0,R)} u^{2}\vol\right)^{1/2}\ +\ {C_0}T(u; g_1,\rho)\\
=\frac {C_02^{Q/2}}{(1-\tau)^{Q/2}} \left( \fint_{B(g_0,R)} u^{2}\vol\right)^{1/2}\ +\ {C_0}T(u; g_1,\rho),
\end{multline}
taking into account that by the definition of $\rho$ we have  $R/\rho=\frac {4}{1-\tau}$.
 We will estimate the tail term in the last line by using the tail term centered at $g_0$ and radius  $R$,  and the average of $u$ over the ball $B(g_0,R)$. For this we split the domain of integration of the integral in the formula for the tail,
\[
T(u; g_1,\rho)=\rho^{2s}\int_{\bG\setminus B(g_1,\rho)} \frac{u(h)}{|h^{-1}\cdot g_1|^{Q+2s}}\, dh,
\]
in two disjoint sets
\[
{\bG\setminus B(g_1,r)}=\left( \bG\setminus B(g_0,R)\right)\cup \left( B(g_0,R)\setminus B(g_1,\rho)\right).
\]
The integral over the second of the above sets is estimated by using $h\notin B(g_1,\rho)$, followed by  H\"older's inequality, to obtain
\begin{multline}\label{e:tail 1st}
\rho^{2s}\int_{B(g_0,R)\setminus B(g_1,\rho)} \frac{u(h)}{|h^{-1}\circ g_1|^{Q+2s}}\, dh \leq \frac{\rho^{2s}}{\rho^{Q+2s}}\int_{B(g_0,R)\setminus B(g_1,\rho)}u\, dh\\
\leq \omega_Q\left(\frac{R}{\rho}\right)^Q\fint_{B(g_0,R)}u\, dh\leq \omega_Q\left(\frac{R}{\rho}\right)^Q\left(\fint_{B(g_0,R)}u^2\, dh\right)^{1/2}\\
=\frac {4^Q\omega_Q}{\left(1-\tau\right)^Q}\left(\fint_{B(g_0,R)}u^2\, dh\right)^{1/2}.
\end{multline}
where $\omega_Q$ is the volume of the unit gauge ball.

In order to estimate the integral in the tail  over $\bG\setminus B(g_0,R)$, we use the triangle inequality, $h\notin B(g_1,\rho)$ and $g_1\in B(g_0,\tau R)$, which give
\[
\frac {|h^{-1}\cdot g_0|}{|h^{-1}\cdot g_1|}\leq \frac {|h^{-1}\circ g_1|+|g_1^{-1}\cdot g_0|}{|h^{-1}\cdot g_1|}\leq 1+\frac {\tau R}{\rho}=1+\frac {4\tau}{1-\tau}=\frac {1+3\tau}{1-\tau}<\frac {4}{1-\tau}=\frac {R}{r}.
\]
Hence, we have
\begin{multline}\label{e:tail 2nd}
\rho^{2s}\int_{\bG\setminus B(g_0,R)} \frac{u(h)}{|h^{-1}\cdot g_1|^{Q+2s}}\, dh \leq \rho^{2s}\left(\frac {R}{\rho} \right)^{Q+2s}\int_{\bG\setminus B(g_0,R)} \frac{u(h)}{|h^{-1}\cdot g_0|^{Q+2s}}\, dh\\
=\left(\frac {R}{r} \right)^{Q} T(u;g_0,R)=\frac {4^Q}{\left(1-\tau \right)^{Q} } T(u;g_0,R).
\end{multline}
Inequalities \eqref{e:sup est tau 1}, \eqref{e:tail 1st} and \eqref{e:tail 2nd} give
\begin{multline}
M_{\tau R} \leq \frac {C_02^{Q/2}}{(1-\tau)^{Q/2}} \left( \fint_{B(g_0,R)} u^{2}\vol\right)^{1/2}+ \frac {{C_0}4^Q}{\left(1-\tau \right)^{Q} } T(u;g_0,R) + \frac {{C_0}4^Q\omega_Q}{\left(1-\tau\right)^Q}\left(\fint_{B(g_0,R)}u^2\, dh\right)^{1/2}\\
\leq \frac {(C_0+\omega_Q)4^Q}{(1-\tau)^{Q}}\left( \fint_{B(g_0,R)} u^{2}\vol\right)^{1/2}\ +\ \frac {C_04^Q}{\left(1-\tau \right)^{Q} } T(u;g_0,R)
\end{multline}
since $0<1-\tau<1$. The proof of \eqref{e:sup r and R} is complete.

Let us note that for $r$ and $R$ as in \eqref{e:sup r and R} satisfying, in addition, $ R_0/2\leq r<R\leq R_0$  we have ${R}/{(R-r)}\leq 2$ and
\[
 T(u;g_0,R)\leq 2^{2s}\left(\frac {R}{R_0} \right)^{2s}T(u;g_0,\frac {R_0}{2})\leq 2^{2s}T(u;g_0,\frac {R_0}{2}).
\]
Therefore, inequality \eqref{e:sup r and R} implies that for all $g_0\in \bG$ with $|g_0|=2R_0\geq 4\bar R_0$ and $ R_0/2\leq r<R\leq R_0$ we have
\begin{multline}\label{e:sup r and R 2}
M_r\leq 2^{Q+2s}C_1 \left[\left(\fint_{B(g_0,R)}
u^{2}  \vol \right)^{1/2} + T(u; g_0,\frac {R_0}{2})\right]\\
\leq M_R^{1/2}\, 2^{Q+2s}C_1 \left(\frac {1}{\omega_Q
R^Q}\int_{B(g_0,R_0)} u \vol \right)^{1/2} + 2^{Q+2s}C_1 T(u;
g_0,\frac {R_0}{2}).
\end{multline}

Inequality \eqref{e:sup r and R 2} implies, using $ab\leq \frac 12(a^2+b^2)$ and $R-r<R$, the inequality
\begin{equation}\label{e:sup r and R 3}
M_r\leq \frac 12 M_R + \frac {A}{(R-r)^{Q}}+B,
\end{equation}
where
\[
A=4^{Q+2s}C_1^2 \frac {1}{\omega_Q }\int_{B(g_0,R_0)}
u\vol \quad\text{and}\qquad B= 2^{Q+2s}C_1 T(u; g_0,\frac {R_0}{2}).
\]
Therefore, by a standard iteration argument, see for example
\cite[p. 191 Lemma 6.1]{Giusti03},
there exists a constant $c_Q$ so that $M_r\leq c_Q\left[
{A}{(R-r)^{-Q}}+B\right]$. Hence, for any $R_0\ge 2\bar R_0$, and
$g_0\in \bG$ with $|g_0|=2R_0$, we have
\[
\sup_{B(g_0,R_0/2)} \leq C \left[ \fint_{B(g_0,R_0)} u\vol \ +\  T(u; g_0,R_0/2)\right].
\]
This completes the proof   of Theorem \ref{t:loc bound}.

\section{Proof of Theorem \ref{t:asympt yamabe}}\label{S:main}

Recall that here we are considering a nonnegative subsolution $u$ to the Yamabe type equation $\frlap u=u^{2^*(s)-1}.$

\subsection{The optimal Lorentz space regularity}\label{ss:alt proof of Lorentz regul for Yamabe}
The first step is to obtain the optimal Lorentz space regularity of $u$. For this  we can adapt to the current setting \cite[Proposition 3.2 \& Proposition 3.3]{BMS16}, which give
\begin{equation}\label{e:sharp Lp regl}
u\in L^{r,\infty}(\bG)\cap L^\infty(\bG),
\end{equation}
recalling that $r= {2^*(s)}/{2}$, cf. \eqref{e:r def}.  Notice that in the cited results from \cite{BMS16}, valid in the Euclidean setting, the authors do not assume that the solution is radial, but the radial symmetry is used ultimately   to obtain the rate of decay of the solution of the fractional Yamabe equation.

For the sake of completeness  and self-containment of the proof, in the setting of a homogeneous group, and right-hand side of the equation modelled on the fractional Yamabe equation, we include a proof of the sharp Lebesgue space regularity \eqref{e:sharp Lp regl}, relying on Proposition \ref{p:Lpregul}. First, Proposition \ref{p:Lpregul}  implies that $u\in L^{q}(\bG)\cap L^\infty(\bG)$, for any $q>r=2^*(s)/2$.  Indeed, if $ V=u^{2^*(s)-2}$ then
since $u\in L^{2^*(s)}(\bG)$ it follows that $V\in L^{r'}(\bG)$. Hence, by Proposition \ref{p:Lpregul} b) it follows $u\in L^{q}(\bG)$ for all $q$ such that $\frac {2^*(s)}{2}< q<\infty$. Hence,  part c) gives  that we also have $u\in L^{\infty}(\bG)$.
Finally, we can see that $u \in L^{2^*(s)/2,\infty}(\bG)$ as follows. Take $F_t(u)=\min\{u,t\}$. Using the equation and the fractional Sobolev inequality we have
\begin{equation}\label{e:weak 1}
\norm{F_t\circ u}_{L^{2^*(s)}(\bG)}^2\leq C\int_{\bG}VuF_t(u)dg,
\end{equation}
where $V=u^{2^*(s)-2}$. Using first that $F_t(u)\leq t$ and then the definition of $V$  we have
\begin{equation}\label{e:decay of VuF}
\int_{\bG}VuF_t(u)dg\leq t\int_{\bG}Vu dg \leq t\int_{\bG}u^{2^*(s)-1}dg<\infty
\end{equation}
since $u^{2^*(s)-1}\in L^1(\bG)$ noting that  $2^*(s)-1>2^*(s)/2$.
Let $\mu(t)$ be the distribution function of $u$. From the definition of $F_t$ we have trivially
\begin{equation}\label{e:weak 3}
\int_{\bG}(F_t( u))^{2^*(s)}dg= t^{2^*(s)}\mu(t)+ \int_{\{u<t\}} u^{2^*(s)} dg \geq t^{2^*(s)}\mu(t).
\end{equation}
Therefore, bounding from above  the left-hand side of the above inequality using \eqref{e:weak 1} and then using \eqref{e:decay of VuF} we have
\[
t^{2^*(s)}\mu(t)\leq \left(C\int_{\bG}VuF_t(u)dg \right)^{2^*(s)/2}\leq C t^{2^*(s)/2},
\]
which shows that $u \in L^{2^*(s)/2,\infty}(\bG)$.

\subsection{Asymptotic behavior of the tail term}

We shall reduce the problem to a question of  $L^p$ regularity of certain truncated powers of the homogeneous norm, which we define next
For $R>0$ and $\alpha>0$ let
\[
\rho(g)=\rho_{\alpha,R}(g)=\left\{
                  \begin{array}{ll}
                    |g|^{-\alpha}, & |g|\ge R\\
                    0, & |g|<R.
                  \end{array}
                \right.
\]
\begin{lemma}\label{l:Lorentz regularity gauge cut-offs} For $Q/p<\alpha$ the Lorentz norms of $\rho_{\alpha,R}$ are given by the following formulas,
\begin{equation}\label{e:Lorentz norm of radial cut}
\norm{\rho_{\alpha,R}(g)}_{L^{p,\sigma}}\ =\  \left[ \int_0^\infty \left( t^{1/p}\rho^*(t)\right)^\sigma \frac {dt}{t}\right] ^{1/\sigma}\ = \  \frac{C_{Q,\sigma}}{R^{\alpha-\frac Q{p}}}.
\end{equation}
\end{lemma}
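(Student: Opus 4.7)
The plan is to compute the distribution function and decreasing rearrangement of $\rho_{\alpha,R}$ explicitly, then reduce the Lorentz integral to a standard Beta function evaluation.

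First I would compute the distribution function $\mu_\rho(s) = |\{g \in \bG : \rho_{\alpha,R}(g) > s\}|$. For $s \ge R^{-\alpha}$ the set is empty since $\rho \le R^{-\alpha}$ everywhere, while for $0 < s < R^{-\alpha}$ the condition $\rho(g) > s$ becomes $R \le |g| < s^{-1/\alpha}$. Applying the polar coordinate formula \eqref{e:polar coord} with $\gamma = 0$ gives $\mu_\rho(s) = \omega_Q\bigl(s^{-Q/\alpha} - R^Q\bigr)$ on this range. Inverting the relation $t = \omega_Q(s^{-Q/\alpha} - R^Q)$ yields the decreasing rearrangement
\[
\rho^*(t) = \bigl(t/\omega_Q + R^Q\bigr)^{-\alpha/Q}, \qquad t > 0.
\]

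Next I would insert this into the Lorentz quasinorm and perform the natural rescaling $t = \omega_Q R^Q u$, which factors out the $R$-dependence. A short computation gives
\[
t^{1/p}\rho^*(t) = \omega_Q^{1/p} R^{Q/p - \alpha}\, u^{1/p}(u+1)^{-\alpha/Q}, \qquad \frac{dt}{t} = \frac{du}{u},
\]
so that
\[
\int_0^\infty \bigl(t^{1/p}\rho^*(t)\bigr)^\sigma \frac{dt}{t} = \omega_Q^{\sigma/p} R^{\sigma(Q/p - \alpha)} \int_0^\infty \frac{u^{\sigma/p - 1}}{(1+u)^{\sigma\alpha/Q}}\, du.
\]
Recognising the last integral as an Euler Beta function, one obtains
\[
\int_0^\infty \frac{u^{\sigma/p - 1}}{(1+u)^{\sigma\alpha/Q}}\, du = B\!\left(\frac{\sigma}{p},\, \sigma\Bigl(\frac{\alpha}{Q} - \frac{1}{p}\Bigr)\right) = \frac{\Gamma(\sigma/p)\,\Gamma(\sigma(\alpha/Q - 1/p))}{\Gamma(\sigma\alpha/Q)}.
\]
Taking the $\sigma$-th root and using $Q/p - \alpha < 0$ (equivalently $\alpha > Q/p$) furnishes exactly $C_{Q,\sigma}/R^{\alpha - Q/p}$ with the constant given in closed form.

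The convergence of the Beta integral is the only place the hypothesis $\alpha > Q/p$ enters: the exponent $\sigma/p - 1$ at $0$ is harmless, while the decay at infinity requires $\sigma\alpha/Q - \sigma/p > 0$, i.e.\ the standing assumption. There is no serious obstacle here; the main point is the bookkeeping in the change of variables and, for completeness, noting that the case $\sigma = \infty$ is handled identically by maximising the function $u \mapsto u^{1/p}(u+1)^{-\alpha/Q}$ at $u^* = Q/(p\alpha - Q)$, which again produces a constant times $R^{-(\alpha - Q/p)}$.
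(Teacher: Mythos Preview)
Your proof is correct and follows the same route as the paper: compute the distribution function via polar coordinates, invert to get $\rho^*(t)=(t/\omega_Q+R^Q)^{-\alpha/Q}$, and then evaluate the Lorentz integral. The paper stops at ``a small calculation shows \eqref{e:Lorentz norm of radial cut}'', whereas you carry out that calculation explicitly via the substitution $t=\omega_Q R^Q u$ and the Beta function, and also treat the case $\sigma=\infty$; this is additional detail rather than a different method.
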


\begin{proof}
Let  $\mu(s)=\left\vert \left\{  g\mid \rho_{\alpha,R}(g)>s \right\}\right\vert$  be  the distribution function of $\rho_{\alpha,R}$. From the representation of the Haar measure in polar coordinates \eqref{e:polar coord}, we have
\[
\mu(s)  =\left\{
              \begin{array}{ll}
                  0, & s>\rho(R)\\
                  \frac{\sigma_Q}{Q} \left( s^{-Q/\alpha} - R^Q \right),& 0<s \le
                  \rho(R).
              \end{array}
         \right.
\]
 The corresponding radially decreasing rearrangement is
\[
\rho^*(t)=\inf \left\{ s\ge 0 \mid \mu(s)\le t\right\} = \left(
\frac {Q}{\sigma_Q}t +R^Q\right)^{-\alpha/Q}
\]
since $s=\rho^*(t)$ is determined from $\sigma_Q/Q
\left(s^{-Q/\alpha}-R^Q \right)=t>0$. A small calculation shows
then that  for some constant $C_{Q,\sigma}$ we have \eqref{e:Lorentz norm of radial cut}.
\end{proof}
Next, we use  the  optimal Lorentz space estimate  and the above Lemma \ref{l:Lorentz regularity gauge cut-offs}  to bound the tail.
\begin{lemma}\label{l:tail decay}
With the standing assumption, i.e.,  $u\in \mathcal{D}^{s,2}(\bG)$ is a nonnegative subsolution to the Yamabe type equation \eqref{e:frac yamabe}, we have that the tail has the following decay,
$$T(u;g_0,R)\equiv R^{2s}\int_{\bG\setminus B(g_0,R)} \frac {u(g)}{|g^{-1}\cdot g_0|^{Q+2s}}{dg}\ \leq C R^{-({Q-2s})}$$
with $C$ a constant depending on the homogeneous dimension $Q$.
\end{lemma}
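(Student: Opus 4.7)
The plan is to reduce the tail estimate to a H\"older inequality in Lorentz spaces, using the optimal regularity $u\in L^{r,\infty}(\bG)$ with $r=2^*(s)/2=Q/(Q-2s)$ established in subsection \ref{ss:alt proof of Lorentz regul for Yamabe}, together with the sharp computation of $L^{p,1}$ norms of truncated gauge powers provided by Lemma \ref{l:Lorentz regularity gauge cut-offs}.

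First, using the symmetry $|g^{-1}\cdot g_0|=|g_0^{-1}\cdot g|$ (property (i) of the homogeneous norm), we recognise the integrand in the tail as a pointwise product against a left-translated truncated gauge power. Precisely, with $\rho_{\alpha,R}$ as defined just before Lemma \ref{l:Lorentz regularity gauge cut-offs},
\[
T(u;g_0,R)=R^{2s}\int_{\bG} u(g)\,\rho_{Q+2s,\,R}(g_0^{-1}\cdot g)\,dg.
\]
Since the Haar measure is bi-invariant on $\bG$, the Lorentz (quasi-)norms are invariant under left translation, so that $\|\rho_{Q+2s,R}(g_0^{-1}\cdot\, \cdot)\|_{L^{r',1}(\bG)}=\|\rho_{Q+2s,R}\|_{L^{r',1}(\bG)}$, with $r'=Q/(2s)$ the H\"older conjugate of $r$.

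Next, I would invoke the H\"older inequality in Lorentz spaces, namely
\[
\int_{\bG}|fg|\,dh\ \leq\ C\,\|f\|_{L^{r,\infty}(\bG)}\,\|g\|_{L^{r',1}(\bG)},
\]
applied to $f=u$ and $g(\cdot)=\rho_{Q+2s,R}(g_0^{-1}\cdot\,\cdot)$. Combined with the preceding translation-invariance remark this yields
\[
T(u;g_0,R)\ \leq\ C\,R^{2s}\,\|u\|_{L^{r,\infty}(\bG)}\,\|\rho_{Q+2s,R}\|_{L^{r',1}(\bG)}.
\]

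Finally, I would evaluate the Lorentz norm on the right by Lemma \ref{l:Lorentz regularity gauge cut-offs} with $p=r'=Q/(2s)$, $\alpha=Q+2s$ and $\sigma=1$. The admissibility condition $Q/p<\alpha$ reduces to $2s<Q+2s$, which is trivially satisfied, and the lemma gives
\[
\|\rho_{Q+2s,R}\|_{L^{r',1}(\bG)}=\frac{C_{Q,1}}{R^{(Q+2s)-2s}}=\frac{C_{Q,1}}{R^Q}.
\]
Inserting this into the previous bound produces
\[
T(u;g_0,R)\ \leq\ C\,R^{2s}\,\|u\|_{L^{r,\infty}(\bG)}\,\frac{1}{R^Q}\ =\ \frac{C}{R^{Q-2s}},
\]
which is the claimed decay, with constant depending only on $Q$ (through $\|u\|_{L^{r,\infty}}$, which is finite and independent of $g_0$, $R$). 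The main (mild) obstacle is the bookkeeping to ensure one uses the correct form of Lorentz H\"older and that the $L^{r',1}$ norm, rather than the $L^{r',\infty}$ norm, appears on the kernel side; that choice is forced by the integrability condition $Q/p<\alpha$ in Lemma \ref{l:Lorentz regularity gauge cut-offs}, which is precisely what makes the endpoint exponent $p=r'$ available.
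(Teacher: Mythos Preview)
Your proof is correct and follows essentially the same approach as the paper: apply the Lorentz--H\"older inequality $\int |fg|\le C\|f\|_{L^{r,\infty}}\|g\|_{L^{r',1}}$ with $f=u$ and $g$ the (translated) truncated gauge power, then invoke Lemma \ref{l:Lorentz regularity gauge cut-offs} with $p=r'$, $\alpha=Q+2s$ to get $\|\rho_{Q+2s,R}\|_{L^{r',1}}\le C R^{-Q}$. Your write-up is slightly more explicit than the paper's in spelling out the left-translation invariance of the Lorentz norms, but the argument is otherwise identical.
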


\begin{proof}
By H\"older's inequality we have
\begin{equation}\label{e:Ho for tail}
T(u;g_0,R)\equiv R^{2s}\int_{\bG\setminus B(g_0,R)} \frac {u(g)}{|g^{-1}\cdot g_0|^{Q+2s}}{dg}\le R^{2s}\norm{u}_{L^{r,\infty}} \norm {\rho_{Q+2s,R}}_{L^{r',1}},
\end{equation}
recalling the definition of $r$ in  \eqref{e:r def} and using the weak $L^{r,\infty}$ regularity of $u$ that we already proved. Hence, the claim of the Lemma follows by Lemma \ref{l:Lorentz regularity gauge cut-offs} which  shows that for some constant $C=C(Q)$ we have
\begin{equation}\label{e:gauge norm decay}
\norm {\rho_{Q+2s,R}}_{L^{r',1}}\leq CR^{-Q}.
\end{equation}
As a consequence, taking into account that $r'=Q/(2s)$ we obtain \eqref{e:gauge norm decay}.

\end{proof}

\subsection{The slow decay}
The  proof of Theorem \ref{t:asympt yamabe} will also use a preliminary ''slow''
 decay of the solution $u$, see \cite[Lemma 2.1]{Z} for case of the Yamabe equation on a Riemannian manifold with maximal volume growth.

\begin{lemma}\label{l:slow decay}
If  $u\in \mathcal{D}^{s,2}(\bG)$ is a nonnegative subsolution to the
Yamabe type equation, then $u$ has the slow decay
$|g|^{({Q-2s})/2}u\in L^\infty(\bG)$.
\end{lemma}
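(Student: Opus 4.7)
The plan is to apply Theorem \ref{t:loc bound} at a point $g_0$ with $|g_0|=2R_0$ large, using the potential $V=u^{2^*(s)-2}$, and then to control separately the ball-average and the tail on the right-hand side using the already-established ingredients (the Lorentz regularity $u\in L^{r,\infty}(\bG)\cap L^\infty(\bG)$ from Subsection \ref{ss:alt proof of Lorentz regul for Yamabe}, and the tail decay from Lemma \ref{l:tail decay}).

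First I would verify the hypotheses of Theorem \ref{t:loc bound} for $V=u^{2^*(s)-2}$. For condition (i), the elementary identity $(2^*(s)-2)r'=2^*(s)$ gives $\int V^{r'}=\int u^{2^*(s)}<\infty$, and since $u\in L^{2^*(s)}(\bG)\cap L^\infty(\bG)$ interpolates to $u\in L^p(\bG)$ for every $p\ge 2^*(s)$, it follows that $V\in L^{t_0}(\bG)$ for any $t_0$ slightly larger than $r'$. For condition (ii), the required decay $\int_{|g|>R}V^{t_0}\le K_0/R^{2st_0-Q}$ is the substantive point: I would verify it by a Cavalieri-type splitting, using the weak bound $|\{u>s\}|\le Cs^{-r}$ at high levels and the bound $|\{|g|>R\}\cap\{u>s\}|\le s^{-2^*(s)}\int_{|g|>R} u^{2^*(s)}$ at low levels, combined with the integrability of $u^{2^*(s)}$. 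I expect this verification to be the main technical obstacle.

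Once Theorem \ref{t:loc bound} applies, choosing $R=R_0$ yields
\[
\sup_{B(g_0,R_0/2)} u \ \le\ C\fint_{B(g_0,R_0)} u \ +\ C\,T(u;g_0,R_0/2).
\]
Applying Jensen's inequality with the convex map $t\mapsto t^{2^*(s)}$, together with $|B(g_0,R_0)|=\omega_Q R_0^Q$ and the identity $Q/2^*(s)=(Q-2s)/2$, bounds the average by
\[
\fint_{B(g_0,R_0)} u \ \le\ \Bigl(\fint_{B(g_0,R_0)} u^{2^*(s)}\Bigr)^{1/2^*(s)} \ \le\ C\,R_0^{-(Q-2s)/2}\|u\|_{L^{2^*(s)}(\bG)}.
\]
Lemma \ref{l:tail decay} supplies $T(u;g_0,R_0/2)\le CR_0^{-(Q-2s)}$, which for $R_0\ge 1$ is dominated by the contribution $R_0^{-(Q-2s)/2}$ coming from the average.

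Combining these estimates, $u(g_0)\le CR_0^{-(Q-2s)/2}=C'|g_0|^{-(Q-2s)/2}$ for all sufficiently large $|g_0|$, while for $|g_0|$ bounded the estimate is immediate from $u\in L^\infty(\bG)$. Hence $|g|^{(Q-2s)/2}u\in L^\infty(\bG)$, completing the argument. The only delicate point is the verification of the decay hypothesis on $V$; the rest is a clean assembly of Jensen's inequality, the volume formula for gauge balls, and the tail decay already established in Lemma \ref{l:tail decay}.
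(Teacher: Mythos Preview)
Your proposal has a genuine gap at the step you yourself flag as ``the main technical obstacle'': verifying hypothesis~(ii) of Theorem~\ref{t:loc bound} for $V=u^{2^*(s)-2}$ using only the Lorentz/Lebesgue regularity $u\in L^{r,\infty}(\bG)\cap L^\infty(\bG)\cap L^{2^*(s)}(\bG)$. These integrability properties do not localize: they tell you that $\int_{|g|>R}u^{(2^*(s)-2)t_0}\,dg\to 0$ as $R\to\infty$, but they carry no information about the \emph{rate} of decay. Your Cavalieri splitting makes this explicit. At low levels you invoke $|\{|g|>R,\,u>s\}|\le s^{-2^*(s)}\int_{|g|>R}u^{2^*(s)}$, but the right-hand side is just $o(1)$ in $R$, not $O(R^{-(2st_0-Q)})$; at high levels the weak-$L^r$ bound $|\{u>s\}|\le Cs^{-r}$ is global and gives no $R$-dependence at all. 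A translate of a fixed bump placed at $|g|=n$ has all the same Lebesgue/Lorentz norms regardless of $n$, so no argument using only these norms can produce a power rate on exterior integrals. In fact, in Subsection~\ref{ss:end of proof} the paper uses precisely the slow decay of Lemma~\ref{l:slow decay} to verify hypothesis~(ii); invoking Theorem~\ref{t:loc bound} to prove the slow decay is therefore circular.

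The paper avoids this circularity by a scaling argument rather than by Theorem~\ref{t:loc bound}. One sets $u_\lambda(g)=\lambda^{(Q-2s)/2}u(\delta_\lambda g)$, which is again a nonnegative subsolution with the same $\mathcal{D}^{s,2}$ and $L^{2^*(s)}$ norms, and works on a fixed unit ball $B(h_0,1)$ with $h_0=\delta_{\lambda^{-1}}g_0$. The key observation is that $\int_{B(h_0,3/2)}u_\lambda^{2^*(s)}=\int_{B(g_0,\,3\lambda/2)}u^{2^*(s)}\to 0$ as $\lambda\to\infty$, so the local $L^{r'}$ norm of $V_\lambda=u_\lambda^{2^*(s)-2}$ becomes small; this allows the potential term in the Caccioppoli/Moser scheme to be absorbed directly, with no need for the quantitative decay hypothesis~(ii). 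The resulting uniform bound $\sup_{B(h_0,1)}u_\lambda\le C$ unwinds to $|g_0|^{(Q-2s)/2}u(g_0)\le C$. In short, the missing idea in your approach is the use of the scale invariance of the equation to trade the unavailable exterior decay of $V$ for the (automatic) smallness of $\int V^{r'}$ on balls drifting to infinity.
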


\begin{proof}
The key to this decay is  the scale invariance of the equation, i.e.,  the fact that $$u_\lambda(g)=\lambda^{({Q-2s})/2}u(\delta_\lambda g)$$ is also a subsolution to  the Yamabe type equation and  the scale invariance of the $\mathcal{D}^{s,2}(\bG)$ and the $L^{2^*(s)}(\bG)$ norms. 
In order to show the slow decay, it is then enough to show that
there exist constants $\lambda_o$ and    $C$, depending only on
$Q$ and $s$, and the invariant under the scaling norms, such that for all
$g_0$ with  $\lambda=|g_0|/2>\lambda_0$   we have on the ball
$B(h_0,1)$ with $h_0= \delta_{\lambda^{-1}}g_0$, the estimate
\begin{equation}\label{e:bound for v}
\max_{h\in B(h_o,1)}u_\lambda(h) \leq C.
\end{equation}
Indeed, \eqref{e:bound for v} implies
\[
\left(\frac {|g_o|}{2}\right)^{(Q-2s)/2}u(g_o)\leq \left(\frac
{|g_o|}{2}\right)^{(Q-2s)/2}\sup_{B(g_0,\lambda) } u(
g)=\max_{B(h_0,1)}u_{\lambda}(h)\leq C,
\]
which gives the desired decay. The  bound \eqref{e:bound for v} will be seen  from the local version of Proposition \ref{p:Lpregul} c)  in the case $V=u_\lambda^{2^*(s)-2}$ by showing  that the local supremum bound is independent of $\lambda$. To simplify the notation let $v=u_\lambda $. We follow the argument in the proof of Theorem \ref{t:loc bound} with $V=v^{2^*(s)-2}$. Furthermore, for $\frac 12<r<R<\frac 32$ we take a bump function $\psi$, so that,
\[
\psi\vert_{B_r}\equiv 1,\qquad \mathrm{supp}\, \psi\Subset B_{\frac {r+R}{2}},
\]
where here and for the remainder of the proof, for any $r>0$  we will denote by $B_r$ the ball $B(h_0,r)$ with the understanding that the center is $h_0$.

In particular, we have \eqref{e:cacciopolli2} with $u_\delta$ replaced by $v_\delta$, but now we can absorb the first term  in the righthand side in the left hand side for all sufficiently large $\lambda$. Indeed, applying H\"older's inequality we have
\begin{equation}
\int\psi^2Vv_\delta^{\beta+1}\vol\leq \left[\int_{B_{\frac
{r+R}{2}}}V^{r'}\vol\right]^{1/r'} \left[ \int_{B_{\frac
{R+r}{2}}}\psi^{2^*(s)}v_\delta^{(\beta+1)2^*(s)/2 }\vol
\right]^{2/2^*(s)}.
\end{equation}
Since $V=v^{2^*(s)-2}$ the first term can be estimated as follows,
\[
\int_{B_{\frac {r+R}{2}}}V^{r'}dh=\int_{B_{\frac {r+R}{2}}}v^{2^*(s)}dh\leq \int_{B(g_0,\lambda)}u^{2^*(s)}\vol\rightarrow 0\quad\text{as}\quad \lambda\rightarrow \infty,
\]
using the scaling property  of the $L^{2^*(s)}$ norm and $u\in L^{2^*(s)}(\bG)$. Therefore, we have the analog of \eqref{e:ineq to start Moser}, i.e., for all $\lambda\ge  \lambda_0$ there exists a constant $C= C(Q,s, K_0)$, such that, the following inequality holds true
\begin{equation}\label{e:ineq to start Moser sale}
\left[ \int_{B_{r}}v_\delta^{(\beta+1)\frac {2^*(s)}{2} }
\vol\right]^{\frac {2}{2^*(s)}} \le \frac
{C\beta^{\kappa}}{R^{2s}}\left[\left(\frac {R}{R-r}
\right)^2\right]\left[1 +\frac {T(v;h_0,R)}{\delta } \right]
\int_{B_R}v_\delta^{\beta+1}\vol.
\end{equation}
A  Moser type iteration argument shows then the existence of a constant  $C$ such that for all $\lambda\ge  R_0$,  $h_0= \delta_{\lambda^{-1}}g_0$ and $|g|=2\lambda$ we have the inequality
\begin{multline}\label{e:v is locally unif bdd}
\sup_{B(h_0,1)} v\ \leq \ C \left[ \left (\fint_{B(h_0,2)} v^{2^*(s)} \vol \right)^{1/{2^*(s)}} + T(v; h_0,1/2)\right]\\
\leq C \left[ \left (\int_{\bG} v^{2^*(s)} \vol\right)^{1/{2^*(s)}} + T(v; h_0,1/2)\right]\\
\leq C\left[\norm{u}_{\DsG}+\norm{u}_{L^{2^*(s)}(\bG)} \norm {\rho_{Q+2s,1}}_{L^{2^*(s)',1}}\right]\le C,
\end{multline}
after using the fractional Sobolev inequality, H\"older's inequality,  \eqref{e:Lorentz norm of radial cut} and the invariance under scalings of the $\DsG$ and $L^{2^*(s)}(\bG)$ norms.

\end{proof}

\subsection{Conclusion of the  proof of Theorem \ref{t:asympt yamabe}}\label{ss:end of proof}
We begin by noting that, from what we have already proved,  Theorem \ref{t:loc bound} can be applied to the potential $V=u^{2^*(s)-2}$. Indeed, the slow decay of $u$, cf. Lemma \ref{l:slow decay}, gives that for some constant $C$ we have
\[
u(g)\leq C|g|^{-({Q-2s})/2},
\]
 which together with \eqref{e:polar coord} implies  the needed assumptions on $V$, in particular, for  $t_0>r'= {Q}/{(2s)}$, cf. \eqref{e:r def},  we have
\[
 \int_{|g|\geq
R}V^{t_0}\vol =\int_{|g|\geq
R} u^{t_0(2^*(s)-2)}\vol  \leq C^{\frac {4st_0}{Q-2s}}\int_{|g|\geq
R} |g|^{-2st_0}\vol
= \frac {\sigma_Q C^{\frac {4st_0}{Q-2s}}}{2st_0-Q}  \frac
{1}{R^{2st_0-Q
}}.
\]
Therefore, Theorem \ref{t:loc bound} gives that for all  $g\in \bG$ and $2R=|g|$ sufficiently large we have \eqref{e:loc bound}, i.e., there exists a constant $C$ independent of $g$, such that,
\begin{equation}\label{e:loc bound3}
\sup_{B(g,R/2)} u\ \leq \ C  \fint_{B(g,R)} u \ +\ {C}T(u; g,R/2).
\end{equation}

Furthermore, the weak $L^{2^*(s)/2}$ regularity \eqref{e:sharp Lp regl} shows that  for  $r=2^*(s)/2$ we have the inequality 
\begin{equation}\label{e:average decaay from Lorentz}
\fint_{B(g,R)} u \vol\leq \frac {r}{r-1}\frac {1}{|B_R|^{1/{r}}}\norm{u}_{L^{2^*(s)/2,\infty}}=\frac {C}{R^{Q-2s}}\norm{u}_{L^{r,\infty}},
\end{equation}
taking into account that for $1\leq p<\infty$ the $L^{p,1}(\bG)$ norm of the characteristic function of the gauge ball $B_R$ is $p|B_R|^{1/p}$.

Now we are ready to conclude the proof of Theorem \ref{t:asympt yamabe} since by \eqref{e:loc bound3}, \eqref{e:average decaay from Lorentz} 
and Lemma \ref{l:tail decay} we can claim the following estimate  for all sufficiently large $2R=|g|$,
\[
u(g)\leq \max_{B(g,R/2)} u\ \leq \ \frac {C}{R^{Q-2s}}\norm{u}_{L^{2^*(s)/2,\infty}}\ +\ \frac {C}{R^{Q-2s}}
\]
with a constant $C$ independent of $g$.



\begin{thebibliography}{10}

\bibitem{Bando89} S. Bando, A. Kasue \& H. Nakajima,
\emph{On a construction of coordinates at infinity on manifolds with fast curvature decay and maximal volume growth.} Invent. Math. 97 (1989), no. 2, 313--349.


\bibitem{BFM}
T. P. Branson, L. Fontana \& C. Morpurgo, \emph{Moser-Trudinger and Beckner-Onofri's inequalities on the $\operatorname{CR}$ sphere}. Ann. of Math. (2) \textbf{177}~(2013), no. 1, 1--52.

\bibitem{BMS16} L. Brasco, S. Mosconi \& M. Squassina,  \emph{Optimal decay of extremals for the fractional Sobolev inequality} Calc. Var. Partial Differential Equations 55 (2016), no. 2, Art. 23, 32 pp.

\bibitem{BP15}  L. Brasco \& E. Parini, \emph{The second eigenvalue of the fractional $p$-Laplacian.} Adv. Calc. Var. 9 (2016), no. 4, 323--355.


\bibitem{CS}
L. Caffarelli \& L. Silvestre, \emph{An extension problem related to the fractional Laplacian}. Comm. Partial Differential Equations \textbf{32}~(2007), no. 7-9, 1245-1260.


  \bibitem{CH89}  Cowling, Michael; Haagerup, Uffe \emph{Completely bounded multipliers of the Fourier algebra of a simple Lie group of real rank one}. Invent. Math. 96 (1989), no. 3, 507--549.

\bibitem{Cy}
J. Cygan, \emph{Subadditivity of homogeneous norms on certain nilpotent Lie groups}. Proc. Amer. Math. Soc. \textbf{83}~(1981), no. 1, 69-70.


\bibitem{FR66} E. B. Fabes \& N. M. Rivi\`ere, \emph{Singular integrals with mixed homogeneity.} Studia Math. \textbf{27}~(1966), 19--38.

\bibitem{Fobams}
G. B. Folland, \emph{A fundamental solution for a subelliptic operator}. Bull. Amer. Math. Soc. \textbf{79}~(1973), 373-376.


\bibitem{Fo}
G. B. Folland, \emph{Subelliptic estimates and function spaces on nilpotent Lie groups}. Ark. Mat. \textbf{13}~(1975), no. 2, 161-207.

\bibitem{FS82}
G. B. Folland \&  E. M. Stein, \emph{Hardy spaces on homogeneous groups}. Mathematical Notes, 28. Princeton University Press, Princeton, N.J.; University of Tokyo Press, Tokyo, 1982.

\bibitem{FGMT}
R. L. Frank, M. del Mar Gonz\'alez, D. Monticelli \& J. Tan, \emph{An extension problem for the $CR$ fractional Laplacian}. Adv. Math. \textbf{270}~(2015), 97--137.


\bibitem{FLS16} R. L. Frank, E. Lenzmann \& L. Silvestre,  \emph{Uniqueness of radial solutions for the fractional Laplacian}. Comm. Pure Appl. Math. 69 (2016), no. 9, 1671--1726.

\bibitem{FL}
R. L. Frank \& E. H. Lieb, \emph{Sharp constants in several inequalities on the Heisenberg group}. Ann. of Math. (2) \textbf{176}~(2012), no. 1, 349-381.



\bibitem{GL92}
N. Garofalo \& E. Lanconelli,  \emph{ Existence and nonexistence results for semilinear equations on the Heisenberg
group.} Indiana Univ. Math. J. {\bf 41 } (1992), no. 1, 71--98.

\bibitem{GLV22p} N. Garofalo, A. Loiudice \& D. Vassilev, \emph{Fractional operators and Sobolev spaces on homogeneous groups}, preprint 2022.

\bibitem{GTaim}
N. Garofalo \& G. Tralli,  \emph{Feeling the heat in a group of Heisenberg type}. Adv. Math. 381 (2021), Paper No. 107635, 42 pp.

\bibitem{GTinter}
N. Garofalo \& G. Tralli, \emph{A heat equation approach to intertwining}. To appear in J. Anal. Math. (ArXiv preprint 2011.10828).

\bibitem{GTpotan}
N. Garofalo \& G. Tralli, \emph{Heat kernels for a class of hybrid evolution equations}. To appear in Potential Anal. (ArXiv preprint 2011.10828).



\bibitem{GV00}
N. Garofalo \& D. Vassilev, \emph{Regularity near the characteristic set in the non-linear Dirichlet problem and conformal geometry of sub-Laplacians on Carnot groups.} Math. Ann. 318 (2000), no. 3, 453--516.

\bibitem{GVduke}
N. Garofalo \& D. Vassilev, \emph{Symmetry properties of positive entire solutions of Yamabe-type equations on groups of Heisenberg type}. Duke Math. J. \textbf{106}~(2001), no. 3, 411-448.

\bibitem{Gav}
B. Gaveau, \emph{Principe de moindre action, propagation de la chaleur et estim\'ees sous elliptiques sur certains groupes nilpotents}. Acta Math. \textbf{139}~(1977), no. 1-2, 95-153.

\bibitem{Giusti03} E. Giusti, \emph{Direct methods in the calculus of variations}. World Scientific Publishing Co., Inc., River Edge, NJ, 2003.

\bibitem{HS90} W. Hebisch \& A. Sikora, \emph{A smooth subadditive homogeneous norm on a homogeneous group.} Studia Mathematica 96.3 (1990), 231--236.

\bibitem{Hu}
A. Hulanicki, \emph{The distribution of energy in the Brownian motion in the Gaussian field and analytic-hypoellipticity of certain subelliptic operators on the Heisenberg group}. Studia Math. \textbf{56}~(1976), no. 2, 165-173.

\bibitem{IMV} S. Ivanov, I. Minchev, D. Vassilev, \emph{Solution of the qc Yamabe equation on a 3-Sasakian manifold and the quaternionic Heisenberg group}, to appear in Analysis \& PDE.

\bibitem{JS99} E. Jannelli \& S. Solimini, \emph{Concentration estimates for critical problems}, Ricerche Mat. 48 (1999), no. Special issue: Papers in memory of Ennio De Giorgi, 233–-257.

\bibitem{JL1}
D. Jerison \& J. Lee,  \emph{A subelliptic, nonlinear eigenvalue problem and scalar curvature on $CR$ manifolds}, Contemporary Math., \textbf{27}~(1984), 57-63.

\bibitem{JL2}
D. Jerison \& J. Lee, \emph{The Yamabe problem on $CR$ manifolds}, J. Diff. Geom., \textbf{25}~(1987), 167-197.

\bibitem{JL3}
D. Jerison \& J. Lee, \emph{Extremals for the Sobolev inequality on the Heisenberg group and the CR Yamabe problem}, J. Amer. Math. Soc., 1, \textbf{1}~(1988), 1-13.

\bibitem{JL4}
D. Jerison \& J. Lee, \emph{Intrinsic CR normal coordinates and the CR Yamabe problem}, J. Diff. Geom., \textbf{29}~(1989), no. 2, 303--343.



\bibitem{Ka}
A. Kaplan, \emph{Fundamental solutions for a class of hypoelliptic PDE generated by composition of quadratic forms}. Trans. Amer. Math. Soc. \textbf{258}~(1980), no. 1, 147--153.

\bibitem{KMS15} T. Kuusi, G. Mingione \& Y. Sire,  \emph{Nonlocal equations with measure data.} Comm. Math. Phys. \textbf{337}~(2015), no. 3, 1317--1368.

\bibitem{LU}
E. Lanconelli \& F. Uguzzoni, \emph{Asymptotic behavior and non-existence theorems for semilinear Dirichlet
problems involving critical exponent on unbounded domains of the Heisenberg group}, Boll. Un. Mat. Ital., (8)
\textbf{1-B}~(1998), 139--168.

\bibitem{Loi19}
A. Loiudice, \emph{Optimal decay of p-Sobolev extremals on Carnot groups.} J. Math. Anal. Appl. 470 (2019), no. 1, 619--631.

\bibitem{MM19}
S. A. Marano \& S. J. N. Mosconi, \emph{Asymptotics for optimizers of the fractional Hardy-Sobolev inequality.} Commun. Contemp. Math. 21 (2019), no. 5, 1850028, 33 pp.


\bibitem{RTaim}
L. Roncal \& S. Thangavelu, \emph{Hardy's inequality for fractional powers of the sublaplacian on the Heisenberg group}. Adv. Math. \textbf{302}~(2016), 106--158.

\bibitem{RT}
L. Roncal \& S. Thangavelu, \emph{An extension problem and trace Hardy inequality for the sublaplacian on $H$-type groups}. Int. Math. Res. Not. IMRN, Issue 14, 4238--4294, https://doi.org/10.1093/imrn/rny137.

\bibitem{St}
E. M. Stein, \emph{Singular integrals and differentiability properties of functions}. Princeton Mathematical Series, No. 30 Princeton University Press, Princeton, N.J. 1970 xiv+290 pp.

\bibitem{Eli}
E. M. Stein, \emph{Some problems in harmonic analysis suggested by symmetric spaces and semi-simple groups}, Actes du Congr\`es International des Math\'ematiciens (Nice, 1970), Tome 1, pp. 173-189. Gauthier-Villars, Paris, 1971.



\bibitem{V06} D. Vassilev,  \emph{Existence of solutions and regularity near the characteristic boundary for sub-Laplacian equations on Carnot groups}, Pacific J. Math. 227 (2) (2006) 361--397.

\bibitem{V11} D. Vassilev,  \emph{ Lp estimates and asymptotic behavior for finite energy solutions of extremals to Hardy-Sobolev inequalities.} Trans. Amer. Math. Soc. 363 (2011), no. 1, 37--62.

\bibitem{V11err} D. Vassilev,  Corrigenda to ''\emph{Lp estimates and asymptotic behavior for finite energy solutions of extremals to Hardy-Sobolev inequalities.} Trans. Amer. Math. Soc. 363 (2011), no. 1, 37--62,'' arxiv/math.AP October 30, 2022.

\bibitem{Vet16} J. V\'etois, \emph{A priori estimates and application to the symmetry of solutions for critical p-Laplace equations}, J. Differential Equations 260 (1) (2016) 149--161.

\bibitem{Z} Q. Zhang,  \emph{A Liouville type theorem for some critical
semilinear elliptic equations on noncompact manifolds}, Indiana
Univ. Math. J., {\bf 50} (2001), 1915--1936.
\end{thebibliography}
\end{document}